\newtheorem{theorem}{Theorem}
\newtheorem{lemma}[theorem]{Lemma}
\newtheorem{proposition}[theorem]{Proposition}
\newtheorem{corollary}[theorem]{Corollary}
\theoremstyle{definition}
\newtheorem{definition}[theorem]{Definition}
\newtheorem{conjecture}[theorem]{Conjecture}
\newtheorem{problem}[theorem]{Open problem}
\DeclareMathOperator{\cop}{c}
\DeclareMathOperator{\diam}{diam}
\DeclareMathOperator{\ded}{\iota}
\DeclareMathOperator{\evcapt}{ec}
\DeclareMathOperator{\reach}{r}
\renewcommand{\geq}{\geqslant}
\renewcommand{\ge}{\geqslant}
\renewcommand{\leq}{\leqslant}
\renewcommand{\le}{\leqslant}
\newcounter{authcount}
\NewDocumentCommand{\authDetails}{m m m o}{%
    \stepcounter{authcount}%
    \IfNoValueTF{#4}{%
        \author[\arabic{authcount}]%
    }%
    {%
        \author[#4]%
    }%
    {%
        \mbox{#1\,$^{\textrm{\href{mailto:#2}{\Letter}}\,\,%
        \ifx&#3&\else\raisebox{-0.2ex}{\orcidlink{#3}}\,\fi}$}%
    }%
}
\title{Seeing is not believing in limited visibility cops and robbers}
\affil[1]{ Department of Mathematics and Informatics\\University of Novi Sad\\Serbia}
\affil[2]{ Memorial University of Newfoundland\\Canada}
\affil[3]{ ETH Zurich\\Switzerland}
\affil[4]{ Université Claude Bernard Lyon 1\\France}
\date{}
\begin{document}
\maketitle

\begin{abstract}
    \noindent
    We consider the model of limited visibility Cops and Robbers, where the
    cops can only see within their $l$-neighbourhood. We prove that the number
    of cops needed to see the robber can be arbitrarily smaller than the number
    needed to capture the robber, answering an open question from the
    literature. We then consider how close we can get to seeing the robber when
    we do not have enough cops, along with a probabilistic interpretation.
\end{abstract}

\section{Introduction}

In the game of cops and robbers on graphs, one player controls a team of cops
and attempts to catch a robber (controlled by the other player) while moving
along the edges of a graph. After the cops and robbers pick their starting
vertices (in that order), the players take turns: the player controlling the
cops decides whether each cop remains stationary or else moves to an adjacent
vertex; the other player then moves the robber by the same rules. Note that
multiple cops can occupy the same vertex. At every moment in the game, both
players have complete knowledge of the graph and the positions of the pieces.
The game ends when a cop occupies the same vertex as the robber, thereby
capturing him. If the robber can evade capture indefinitely, then the game
never ends, and we say that the robber wins.

Similar games on graphs, where some pursuers aim to catch an evader, have been
around for at least half a century (an early reference is
\cite{parsons:pursuit-evasion}), while their non-discrete counterparts have
been in existence for even longer (e.g.\, the monograph
\cite{isaacs:differential}). The particular game from the preceding paragraph
first appeared in Quilliot's thesis \cite{quilliot:jeux}. This predates
Nowakowski and Winkler's work \cite{nowakowski.winkler:vertex-to-vertex}, which
independently considered the same game, although the latter is still sometimes
referred to as a starting point of the literature (as mentioned in
\cite{bonato.nowakowski:game}). In both works
(\cite{quilliot:jeux,nowakowski.winkler:vertex-to-vertex}), only the version
with one cop is studied (and all graphs for which the cop wins are
characterised). The game with more cops was considered for the first time by
Aigner and Fromme \cite{aigner.fromme:game}; this initiated the study of the
\emph{cop number} of a graph, which eventually became a central topic in the
research of vertex-pursuit games on graphs. The \emph{cop number} of a graph
$G$, denoted $\cop(G)$, is the minimum number of cops required to guarantee the
capture of the robber, regardless of the robber's strategy.

The literature concerning the cop numbers of graphs is now too vast to be
surveyed in a short introduction here; see Bonato and Nowakowski's monograph
\cite{bonato.nowakowski:game} for a good background on the topic. Even with its
breadth, some generally applicable theorems are rather sparse. For example, it
is unknown what the growth rate of the maximal cop number among all connected
graphs with $n$ vertices (for $n\to\infty$) is. It was conjectured by Meyniel
in 1985 to be $O(\sqrt{n})$ (this conjecture was a personal communication to
Frankl, as mentioned in \cite{frankl:cops} in 1987).

Many researchers would consider Meyniel's conjecture to be the main question in
the area, but, regarding the state of the art, it is not even known whether
there exists a single positive $\varepsilon$ for which the bound
$O(n^{1-\varepsilon})$ holds (this is sometimes referred to as \emph{weak
Meyniel's conjecture}); for the best known upper bounds, an interested reader
may check \cite{scott.sudakov:bound} or \cite{lu.peng:on}. It is known,
however, that if the bound from Meyniel's conjecture is true, then it is the
best possible. Indeed, already in \cite{aigner.fromme:game} it was proven that,
if the girth of $G$ is at least $5$, then $\cop(G)$ is no less than the minimum
degree of $G$; as incidence graphs of finite projective planes have girth $6$
and minimum degree $\Omega(\sqrt n)$, the conclusion follows. (It is possible
that this particular construction appeared for the first time in print only in
\cite{prałat:when}, but, in \cite{frankl:cops}, where Meyniel's conjecture is
originally introduced, it is immediately added: ``\dots which would be the best
possible,'' without implying this nor any other construction.) It will turn out
that this construction is relevant also for our present work, later in
\cref{subsec:girth}.

Given the popularity of this game (and perhaps also the difficulty of more
general results), various variations have appeared, such as: when the cops are
lazy \cite{bal.bonato.ea:lazy}; when there are zombies and survivors instead of
cops and robbers \cite{fitzpatrick.howell.ea:deterministic}; a combination of
the two, with lazy zombies \cite{bose.de-carufel.ea:pursuit-evasion}; when the
robber is invisible and possibly drunk \cite{kehagias.mitsche.ea:cops}; when
there is a cheating robot instead of a robber \cite{huggan.nowakowski:cops};
where the cops are using helicopters against an infinitely fast (and possibly
invisible) robber~\cite{seymour.thomas:graph} etc. These colourful examples are
to name but a few; see \cite[Chapters 8 and 9]{bonato.nowakowski:game} for
more. Regarding the application of (some of) those variants, there is a rather
audacious (visionary?) claim in \cite{bonato:what}: ``\dots and variants of the
game have been recently considered in fields such as robotics and mathematical
counter-terrorism'' (perhaps referring to yet another vividly-named version of
the game, called Seepage; the introduction of
\cite{bonato.mitsche.ea:vertex-pursuit} elaborates more on this).

Yet another direction in which the problem can be generalised is if the current
distance between the cops and the robber matters: for example, if the cops'
goal is just to reach within a distance of $k$ from the robber (where $k$ is a
parameter given in advance)---that is (if one is willing to accept a perhaps
overly graphic metaphor), to reach a distance from which they could shoot the
robber \cite{bonato.chiniforooshan:pursuit,chalopin.chepoi.ea:cop}. But what is
possibly more studied is the variant of `limited visibility', in which one of
the players cannot see the other unless the distance between them is at most
$l$ (where $l$ is a parameter given in advance).

One special case of this, which appears to have received the most attention in
the literature, is when the cops cannot see the robber at all; one might
imagine that he is invisible (or the cops are blind, and can detect the robber
only by bumping into him---in other words, occupying the same vertex as the
robber). We have already presented some references that treat an invisible
robber in various settings (and for more, see \cite{bienstock:graph,
alspach:searching, fomin.thilikos:annotated, bonato.yang:graph}).
Chronologically, however, the first reference that describes the rules of
classical cops and robbers with the addition of robber invisibility appears to
be \cite{tosic:vertex-to-vertex}. Making the robber invisible indeed makes
sense from a practical point of view, since this models the problem where a
person has to be \emph{found}, not only pursued (like a prince searching for
the princess \cite{britnell.wildon:finding}). In that vein, it is of note that
the reference \cite{parsons:pursuit-evasion} (which we cited earlier as a
genesis for a lot of research) was first inspired by an article published in a
spelunkers' journal \cite{breisch:intuitive}!

Other settings of limited visibility for one or both sides (and not only with
$l=0$), have been studied in \cite{aleliunas.karp.ea:random,
adler.racke.ea:randomized, isler.kannan.ea:randomized, isler.karnad:role,
abramovskaya.fomin.ea:how}. It is a curiosity that, in the very article where
the cop number originated \cite{aigner.fromme:game}, the authors write: ``One
could for example, allow complete information only when $C$ and $R$ are at most
a distance $d$ apart ($C$ and $R$ have `eye-contact'),'' without further
explanation. It took a while for this idea to catch on.

The present research has been provoked in particular by the article
\cite{clarke.cox.ea:limited}. When cops have limited visibility $l$ (but the
robber has no such restriction), we can consider a game with the classical goal
(they want to capture the robber), but also a game in which their goal is just
to see the robber (essentially a combination of the classical game with limited
visibility, along with the trigger-happy cops from a few paragraphs ago). In
\cite{clarke.cox.ea:limited}, it was posed as an open question whether the
difference of those two versions of the cop number (where $l$ is fixed) can be
arbitrarily large; that is, can the cost of wanting to capture the robber
instead of seeing them be arbitrarily high? In the present article, we answer
this question in the affirmative (\cref{sec:cost}).

In \cref{sec:cleaning}, we study the game from an alternative viewpoint akin to
edge-searching, where the cops are trying to clean as many vertices in the
graph as possible; that is, the cops are trying to limit the possible locations
of the robber to as small a set as they can. In particular, we discuss: the
difference between being able to \emph{see} the robber versus being able to
\emph{infer} their location (because of where the cops know the robber is not);
how many vertices (in more generality), the cops are able to guarantee do not
contain the robber; which graphs are particularly difficult to clean well.

In \cref{sec:prob}, we discuss a probabilistic interpretation of the game;
usually, in limited visibility cops and robbers, the robber is assumed to be
omniscient, having knowledge of everything that the cops will do in the future,
and so they will never `accidentally' stumble into the cops. But what happens
when the robber is not omniscient? How many cops do we need to eventually
capture (or see) the robber? It does not take much work to show, for example,
that wanting to capture the robber with $l$-visibility cops requires exactly
the same number of classical cops to capture the robber. It may be worthwhile
investigating differences in (expected) capture time, however.

\section{The cost of capture}
\label{sec:cost}

In light of considering cops with limited visibility, let us start this section
by recalling the following definitions.

\begin{definition}
    If $G$ is a graph and $l$ is a non-negative integer, then we define the
    \emph{$l$-visibility cop number} of $G$, written $\cop_l(G)$, as the
    minimum number of cops with visibility $l$ required to guarantee seeing the
    robber, regardless of the robber's strategy (but assuming an optimal cop
    strategy).

    Similarly, we define the \emph{$l$-visibility capture number} of $G$,
    written $\cop'_l(G)$, as the minimum number of cops with visibility $l$
    required to guarantee the capture of the robber.
\end{definition}

What we want to better understand are the relationships between these
parameters $\cop'_l(G)$ and $\cop_l(G)$, and also how they relate to the
classical $\cop(G)$. The following inequalities are immediate:
\[
    \cop'_l(G)\leq\cop_l(G),\quad\cop(G)\leq\cop_l(G).
\]

In \cite{clarke.cox.ea:limited}, a relationship was established between
$\cop_l(G)-\cop'_l(G)$ and $\cop_l(G)-\cop(G)$, as illustrated in their
theorem.

\begin{theorem}[{\cite[Theorem 3.1]{clarke.cox.ea:limited}}]
    If $G$ is a graph and $l\geq2$, then either $\cop'_l(G)=\cop_l(G)$ or else
    $\cop(G)\leq\cop_l(G)\leq\cop(G)+1$.
\end{theorem}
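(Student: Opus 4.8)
The plan is to use the two inequalities already recorded, namely $\cop'_l(G)\le\cop_l(G)$ and $\cop(G)\le\cop_l(G)$, to reduce the statement to a single implication, and then prove that implication by a strategy argument. Since the lower bound $\cop(G)\le\cop_l(G)$ is automatic, the theorem is equivalent to the implication: whenever $\cop'_l(G)<\cop_l(G)$ one has $\cop_l(G)\le\cop(G)+1$. I would prove the contrapositive. So write $k=\cop(G)$ and $c=\cop_l(G)$, assume $c\ge k+2$, and suppose for contradiction that seeing is strictly cheaper than capturing, i.e.\ $\cop'_l(G)<c$. The goal is then to exhibit a strategy by which only $c-1$ cops with visibility $l$ capture the robber, contradicting the minimality of $c$.

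The argument rests on two ingredients. First, a \emph{tracking} observation: a lone cop that has once come within distance $l$ of the robber can keep the robber within distance $l$ forever. On its turn the cop steps along a shortest path toward the robber's current vertex, cutting the distance by one; the robber can then restore it by at most one, so the distance never exceeds $l$ and the cop still sees the robber at the start of the next round (this needs only $l\ge1$, so it certainly applies when $l\ge2$). Hence, once the robber has been seen, a single dedicated cop pins the robber's exact position down for the remainder of the game. Second, a \emph{repositioning} fact for the classical full-visibility game: if $\cop(G)\le k$, then $k$ cops capture from \emph{any} configuration once the robber's location is known---walk all $k$ cops to a common vertex (the robber cannot prevent this on a finite graph), and then run a standard winning strategy, which succeeds against a robber sitting at whatever now-known vertex it occupies.

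I then combine these. Since $\cop'_l(G)<c$ we have $c-1\ge\cop'_l(G)$, so $c-1$ cops can guarantee merely \emph{seeing} the robber; run such a seeing strategy until some cop first lies within distance $l$. Freeze the roles at that instant: that cop becomes the tracker and, by the first ingredient, forever relays the robber's exact position to everyone else. The remaining $c-2\ge k$ cops therefore operate exactly as if under full visibility, so $k$ of them capture the robber by the second ingredient. In total $k+1\le c-1$ cops capture with visibility $l$, contradicting $c=\cop_l(G)$. Hence the strict inequality is impossible and $\cop'_l(G)=\cop_l(G)$, which is the contrapositive we wanted.

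The main obstacle is the \emph{repositioning} fact: one must be sure that launching a classical winning strategy late, from arbitrary cop positions and with the robber's position supplied online by the tracker, is genuinely as good as having full information from the first move. The careful point is that marching the cops into formation merely wastes moves but cannot help the robber escape on a finite graph, and that a strategy designed to beat a freely-placed robber a fortiori beats a robber whose (known) position is already fixed. The remaining delicacy is a turn-order check---that with the cops moving before the robber the tracker never loses sight---together with the bookkeeping that exactly one cop beyond the classical $k$ is spent on tracking, which is precisely what yields the additive $+1$.
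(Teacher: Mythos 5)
The paper gives no proof of this statement---it is quoted verbatim from Theorem~3.1 of \cite{clarke.cox.ea:limited}---so your attempt can only be judged on its own merits. Your skeleton is the right one: reducing the disjunction to ``if $\cop'_l(G)<\cop_l(G)$ then $\cop_l(G)\le\cop(G)+1$'', running a seeing strategy first, converting a sighting into permanent knowledge via a dedicated tracker, and having $\cop(G)$ cops finish classically, with the bookkeeping $k+1\le c-1$. (You have also, sensibly, read $\cop_l$ as the capture number and $\cop'_l$ as the seeing number, which matches the usage throughout this paper and the original source, even though the definition block in Section~2 transposes the two names.) The genuine gap is in your first ingredient, and you half-notice it yourself when you defer ``the turn-order check'' to the end: the tracker does \emph{not} get to relay the robber's exact position in real time. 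The cops move before the robber, so the sighting produced by the seeing strategy may occur immediately after a cop move, at distance exactly $l$. The robber's reply move can then take him to distance $l+1$, out of sight; when the tracker next chooses its move, it does not know ``the robber's current vertex'', only where he was one robber-move ago. Stepping towards that last-seen vertex keeps the after-cop-move distance at most $l$, but the robber can hold it at exactly $l$ forever (picture a chase around a long cycle), so what the tracker actually supplies is the robber's position with a one-turn lag. Your second ingredient then collapses: a classical winning strategy consumes the robber's \emph{current} position at each decision point, and playing it on lag-one information is a genuinely different game---``operate exactly as if under full visibility'' is precisely the thing that needs proof. Tellingly, your argument only ever invokes $l\ge1$, while the theorem is stated for $l\ge2$; a proof in which the hypothesis never bites should raise suspicion.

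The hypothesis $l\ge2$ enters exactly at the repair. Once the tracker maintains the lagged invariant (after every cop move, the robber is seen at distance at most $l$), the sequence of last-seen positions is itself a legal robber walk that the cops know in time at every decision point. Let the $\cop(G)$ capturers play a classical winning strategy against this \emph{shadow} robber (your repositioning fact lets them start it from wherever they stand). When one of them lands on the shadow, the true robber is one robber-move ahead, hence at distance at most $1$; that cop sees him, and after the robber's reply the distance is at most $2\le l$, so---here is where $l\ge2$ is indispensable---sight survives the reply, and the cop can thereafter maintain after-cop-move distance at most $l-1$. Only from that moment do the cops possess genuine real-time information, whereupon $\cop(G)$ of them execute the classical strategy and your count $k+1\le c-1$ yields the contradiction as intended. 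So your architecture is salvageable, but the shadow-robber bridge (or some equivalent device converting lag-one knowledge into real-time knowledge) is a missing idea, not a delicacy, and without it the step ``the remaining $c-2\ge k$ cops operate exactly as if under full visibility'' is unjustified.
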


Furthermore, although they did not prove it specifically, it was written in
\cite{clarke.cox.ea:limited} that the difference $\cop_l(G)-\cop(G)$ can be
arbitrarily large (this followed swiftly from their Theorem 4.1).

\begin{theorem}[\cite{clarke.cox.ea:limited}]
    If $k$ and $l$ are positive integers, then there exists a graph $G$ with
    $\cop_l(G)-\cop(G)\geq k$.
\end{theorem}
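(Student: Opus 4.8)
The plan is to exhibit a single family of graphs in which \emph{capturing} the robber is cheap but merely \emph{locating} him is expensive, thereby separating $\cop$ from $\cop_l$. The natural candidates are large square grids: once the robber's position is known, a constant number of cops can corner him, but a short-sighted cop must physically patrol, and a two-dimensional grid is too ``wide'' to be patrolled by few cops of visibility $l$. Concretely, let $G=G_n$ be the $n\times n$ grid, i.e.\ the graph on vertex set $\{1,\dots,n\}^2$ in which two vertices are adjacent exactly when their $\ell_1$-distance is $1$; equivalently $G_n=P_n\mathbin{\Box}P_n$. I will establish $\cop(G_n)\le 2$ together with $\cop_l(G_n)\ge c(n,l)$ for a quantity $c(n,l)\to\infty$ as $n\to\infty$, and then choose $n$ so large that $c(n,l)\ge k+2$.

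The bound on $\cop(G_n)$ is the easy half and crucially exploits full visibility. Since $\cop(P_n)=1$ and the cop number is subadditive on Cartesian products, $\cop(G\mathbin{\Box}H)\le\cop(G)+\cop(H)$, I immediately obtain $\cop(G_n)\le\cop(P_n)+\cop(P_n)=2$. The product bound is standard: one cop shadows the robber's first coordinate and the other his second, each running the winning one-cop strategy on a path. Note that this strategy reads off the exact position of the robber at every turn, so it says nothing about the limited-visibility game.

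The lower bound on $\cop_l(G_n)$ is where the real work lies, and it is the step I expect to be the main obstacle. The geometric heart is a separator estimate. Write $N_l[B]$ for the set of vertices lying within distance $l$ of some cop position in a configuration $B$ of $c$ cops. If the cops are ever to confine the (omniscient, invisible-beyond-$l$) robber to one side of the grid, the covered set $N_l[B]$ must form a left--right vertex cut; since the $n$ rows are pairwise vertex-disjoint left--right paths, such a cut must meet every row, while a single $l$-ball $N_l[(i,j)]$ meets only the rows $i-l,\dots,i+l$, hence at most $2l+1$ of them. Therefore any instantaneous barrier requires $c\ge n/(2l+1)$ cops, and by symmetry the same holds for top--bottom barriers. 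I would now convert this into a robber strategy: when $c<\lfloor n/(2l+1)\rfloor$, the uncovered region $V\setminus N_l[B]$ is never a left--right or top--bottom separator, so the robber always lies in a large, ``spanning'' uncovered component, and at each turn he retreats one step to stay uncovered. The delicate point—and the genuine obstacle—is to verify that the robber can never be cornered under the simultaneous single-step moves of both sides: the cops can only shrink the robber's uncovered component by sealing its boundary, which by the isoperimetric inequality for the grid has size $\Omega(\min(\sqrt{s},n))$ for a component of $s$ vertices, while $c$ cops erode only $O(cl)$ boundary vertices per turn. I would package this as a potential argument tracking the robber's uncovered component and showing its boundary always has an unsealed opening through which he escapes.

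Granting both bounds, the conclusion is immediate. Taking $c(n,l)=\lfloor n/(2l+1)\rfloor$, pick $n$ with $\lfloor n/(2l+1)\rfloor\ge k+2$; then $\cop_l(G_n)-\cop(G_n)\ge(k+2)-2=k$, as required. What matters is only that the capture bound is an absolute constant while the seeing bound is unbounded in $n$ for fixed $l$, so any divergent lower bound would do—even the cruder estimate $c\cdot O(l^2)\ge n$, giving $\cop_l(G_n)=\Omega(n/l^2)$ directly from Menger's theorem without the per-row refinement, already suffices.
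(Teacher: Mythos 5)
The easy half of your plan is unproblematic: a constant upper bound on $\cop(G_n)$ is all you need, and even without the product inequality you could fall back on Aigner--Fromme's planarity bound $\cop(G_n)\le 3$. (A small remark on parameters: in this paper's usage $\cop_l$ is the number of limited-visibility cops needed to \emph{capture}, while $\cop'_l\le\cop_l$ is the seeing number, so a lower bound on merely ``locating'' the robber would indeed transfer to $\cop_l$; that part of your plan is sound.) The genuine gap is exactly where you flag it, and it is not a deferrable technicality --- it is the entire content of the lower bound. Your separator count shows only that an \emph{instantaneous} left--right barrier requires at least $n/(2l+1)$ cops; nothing in the proposal shows the cops must ever form such a barrier. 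In the cleaning formulation, the cops win by shrinking the set of possible robber locations (the gaseous set) to the empty set, and recontamination is permitted, so a priori there could be non-monotone cleaning schedules in which the boundary between clean and gaseous regions is never fully sealed at any single time step. Ruling these out is the hard part: the natural invariants fail on first inspection (e.g.\ ``some row is wholly gaseous'' does not persist, since the row left uncovered at time $t+1$ may have been cleaned earlier and not yet recontaminated), and the isoperimetric potential argument you sketch (``I would now convert\dots'', ``I would package this\dots'') is asserted rather than carried out, including the delicate point that the robber must be able to track a shifting uncovered component one step per turn under simultaneous moves. As written, the proposal is a plausible plan whose central claim, $\cop_l(G_n)=\Omega(n/l)$, remains unproven.

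It is also worth knowing that the paper itself offers no proof of this statement: it is quoted from \cite{clarke.cox.ea:limited}, where it ``followed swiftly from their Theorem 4.1'' via trees. Since $\cop(T)=1$ for every tree, while their results produce trees whose limited-visibility numbers are arbitrarily large (indeed the present paper later uses trees $T$ with $\cop'_l(T)=k+1$), the separation comes essentially for free, with an elementary counting argument on branching structures and no isoperimetric machinery. So even if you completed the grid lower bound --- which would be a nice self-contained result, morally akin to sweeping bounds in graph searching --- you would be taking a substantially heavier route than the one-line tree construction the literature already supplies.
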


It was then posed as an open problem whether there exists a graph $G$ such that
the difference between $\cop_l(G)-\cop'_l(G)$ can be arbitrarily large; i.e.\
what is the \emph{cost} of wanting to capture instead of to see?

\begin{problem}[c.f.\ {\cite[Problem 5.3]{clarke.cox.ea:limited}}]
    \label{prob:diff}
    If $k$ and $l$ are positive integers, does there exist a graph $G$ with
    $\cop_l(G)-\cop'_l(G)\geq k$?
\end{problem}

In this section, we answer \cref{prob:diff} in the affirmative: we show that
the difference can indeed be arbitrarily large. We split the proof into two
lemmas that consider the cases $l=1$ and $l>1$ separately.

In \cite[Corollary 3.2 on p.~408]{bonato.burgess:cops}, Bonato and Burgess
proved that for every non-negative integer $k$ there exists a graph $G$ of
diameter 2 such that $\cop(G)\geqslant  k$.

\begin{lemma}
    \label{lem:l>1}
    If $l>1$ and $k$ are positive integers, then there exists a graph $G$ such
    that $\cop_l(G)-\cop'_l(G)\geqslant k$.
\end{lemma}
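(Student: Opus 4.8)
The plan is to let the Bonato--Burgess graphs do the heavy lifting. Their diameter-$2$ graphs have unboundedly large cop number, and on such a graph the hypothesis $l > 1$ forces the whole vertex set into the visibility of a single cop, so that \emph{seeing} the robber becomes free while \emph{capturing} him stays as hard as in the classical game. Concretely, I would fix $k$ and take $G$ with $\diam(G) = 2$ and $\cop(G) \ge k+1$, which exists by the result quoted just above the lemma; I claim this single graph witnesses the statement for every $l > 1$ at once.

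For the seeing parameter $\cop'_l(G)$, I would observe that $l > 1$ gives $\diam(G) = 2 \le l$, so the ball of radius $l$ about any vertex is all of $V(G)$. A lone cop placed on an arbitrary vertex therefore has the robber within its visibility from the opening of play, no matter how the robber responds, so one cop guarantees seeing the robber (and one is plainly necessary); hence $\cop'_l(G) = 1$. For the capture parameter $\cop_l(G)$, I would invoke the immediate inequality $\cop(G) \le \cop_l(G)$---limiting the cops' visibility can only make capture harder---so that $\cop_l(G) \ge \cop(G) \ge k+1$. Subtracting,
\[
\cop_l(G) - \cop'_l(G) \ge (k+1) - 1 = k,
\]
exactly as required, and consistently with the recorded ordering $\cop'_l \le \cop_l$.

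The argument is short precisely because almost all of the difficulty is outsourced to the Bonato--Burgess construction, so the only place a reader might want reassurance---the main, if modest, obstacle---is the claim that a single cop really does guarantee \emph{seeing} the robber on a diameter-$2$ graph. This is immediate once $l \ge \diam(G)$, and it is exactly here that the restriction $l > 1$ is used: in the complementary case $l = 1$ we have $\diam(G) = 2 > l$, the visibility no longer covers the graph, and this collapse fails---which is what forces the separate treatment announced for the lemma handling $l = 1$.
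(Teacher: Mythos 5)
Your proposal is correct and takes essentially the same route as the paper: both arguments fix a Bonato--Burgess graph of diameter $2$ with $\cop(G)\geqslant k+1$, observe that $l>1$ makes the whole graph visible to a single cop placed anywhere (so $\cop'_l(G)=1$), and conclude from the immediate inequality $\cop_l(G)\geqslant\cop(G)\geqslant k+1$. Your added remark that $l=1$ is exactly where this collapse fails matches the paper's motivation for treating that case separately.
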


\begin{proof}
    Let $G$ be a graph of diameter $2$ such that $\cop(G)\geqslant k+1$. Then,
    since $l>1$, observe that $\cop'_l(G)=1$ (the cop can be placed anywhere on
    the graph). But $\cop_l(G)\geq\cop(G)\geqslant  k+1$, and hence
    $\cop_l(G)-\cop'_l(G)\geqslant  k$.
\end{proof}

Of course, in the proof of \cref{lem:l>1}, the visibility of our cops was at
least as large as the diameter of the graph, and so seeing the robber was
trivial. But it is in fact not difficult to construct a graph with an
arbitrarily large diameter in which $\cop_l(G)-\cop_l'(G)\geqslant k$ still
holds for given $k$ and $l$: for any integer $d>2$, take the graph $G$ as in
the proof of \cref{lem:l>1}, add a new vertex $u$, and connect $u$ to any
vertex $v$ in $G$ by a path of length $d-1$. Let us call this new graph $G'$.
Clearly $\diam G\ge d$ and $\cop_l(G')\ge \cop_l(G)\ge k+1$.

Moreover, we still need only one cop to see the robber: let this cop start at
$u$ and move towards $v$ in the first $d-1$ moves. Once the cop has reached
$v$, the $u$--$v$ path will be clear of the robber, who must then be at one of
the vertices in $G$. But the cop can see all of $G$ from $v$, and so they can
see the robber as well. This gives $\cop_l'(G')=1$, and hence
$\cop_l(G')-\cop_l'(G')\ge k$.

While the case of $l>1$ was rather trivial (\cref{lem:l>1}), the case $l=1$
will be much more involved.

\begin{lemma}
    \label{lem:l=1}
    If $k$ is a positive integer, then there exists a graph $G$ such that
    $\cop_1(G)-\cop'_1(G)\geqslant k$.
\end{lemma}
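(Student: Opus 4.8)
The plan is to construct, for each $k$, a graph $G$ where one cop with visibility $l=1$ can always capture the robber (so $\cop'_1(G)=1$), yet where seeing the robber requires at least $k+1$ cops (so $\cop_1(G)\ge k+1$). The central difficulty is that with $l=1$ we cannot reuse the diameter-$2$ trick from \cref{lem:l>1}: a single cop with visibility $1$ sees almost nothing, so capturing with one cop is genuinely hard, while \emph{seeing} must somehow be made expensive. The key idea I would pursue is to build a graph that is highly symmetric and \enquote{locally tree-like} or grid-like enough that detecting the robber from a distance is impossible (forcing many cops to cover the graph just to glimpse him), but which nonetheless admits a \emph{cop-win} structure for a single capturing cop — for instance a retract-based or dismantlable core augmented with gadgets that defeat visibility.

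First I would isolate the two requirements and attack them with separate gadgets glued together. For the easy direction, $\cop'_1(G)=1$, I would aim to make $G$ (or a suitable quotient the cop can exploit) \emph{cop-win}, i.e.\ dismantlable, since a cop-win graph is captured by one fully-informed cop; the subtlety is that our cop only has visibility $1$, so I must verify that the standard corner-retraction strategy can be executed using only local information, or else design the graph so that the robber's position is forced/deducible as the cop shrinks the robber-territory. A clean way to guarantee this is to give the graph a \enquote{funnel} or broom structure where repeatedly clearing vertices corners the robber into a shrinking region regardless of visibility. For the hard direction, $\cop_1(G)\ge k+1$, I would invoke or adapt the minimum-degree/girth lower bound mentioned in the introduction (for girth $\ge 5$, $\cop(G)$ is at least the minimum degree): by taking a component with high girth and minimum degree $\ge k+1$ and attaching it appropriately, any cop team with visibility $1$ behaves, with respect to \emph{seeing}, no better than classical cops needing to corner a robber, forcing $\cop_1(G)\ge k+1$.

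The real work, and the step I expect to be the main obstacle, is reconciling these two demands in a single graph: the same construction must simultaneously be easy to \emph{finish} (one cop captures) and hard to \emph{start} (many cops needed even to see). These pull in opposite directions, because a graph that is cop-win for capture is usually also easy to see. I would resolve this by exploiting the gap between \emph{seeing} and \emph{capturing} under visibility $1$: design $G$ so that once the single capturing cop is allowed to play the full game, he can herd the robber through a carefully laid-out \enquote{trap} subgraph where capture becomes forced even though he never saw the robber until the very end — turning the robber's invisibility against him. Concretely, I would look for a graph built from a high-girth, high-degree expander-like piece (supplying the $\cop_1\ge k+1$ lower bound via the Aigner–Fromme argument) connected to a dismantlable trapping gadget through a narrow interface, then argue (i) a lone cop with $l=1$, by patrolling the interface and the trap, forces the robber into the gadget and captures him, while (ii) seeing the robber anywhere in the expander piece demands $k+1$ cops.

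The verification I anticipate needing most care is the lower bound on $\cop_1(G)$: I must show that visibility $1$ gives the \emph{seeing} cops essentially no advantage over classical pursuit on the high-girth piece, so that the Aigner–Fromme minimum-degree bound transfers to $\cop_1$. The likely mechanism is that in a graph of girth $\ge 5$, a cop at distance $1$ from the robber can be evaded exactly as in the classical game, so covering enough vertices to guarantee a sighting is no cheaper than guarding enough neighbourhoods to capture classically; making this rigorous — and ensuring the trapping gadget does not accidentally lower $\cop_1$ — is where I expect the argument to require the most delicate case analysis.
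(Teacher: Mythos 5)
Your proposal founders on the direction of the separation, and the construction you sketch is impossible in principle. A sighting can never require more cops than a capture: a cop occupying the robber's vertex is in particular within distance $1$ of him, so any capture strategy is automatically a seeing strategy. Consequently there is no graph on which one $1$-visibility cop can guarantee capture while $k+1$ cops cannot guarantee a sighting; if the capture number is $1$, the seeing number is $1$ as well. In the notation of the lemma (as fixed by the paper's usage, in particular by the immediate inequality $\cop'_l(G)\le\cop_l(G)$ and by the base case $G=C_4$, where two cops are needed to capture but one suffices to see), $\cop_1(G)$ counts cops needed to \emph{capture} and $\cop'_1(G)$ counts cops needed to \emph{see}; the task is therefore the opposite of the one you set yourself: make capture expensive while keeping seeing cheap. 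Your two gadgets are moreover incompatible on their own terms: if $G$ contains a piece of girth at least $5$ and minimum degree at least $k+1$, then by Aigner--Fromme its classical cop number is at least $k+1$, and since limiting visibility can only handicap the cops, no single $1$-visibility cop can capture a robber who simply never leaves that piece --- no dismantlable ``trap'' attached through a narrow interface can help, because the omniscient robber is under no obligation to enter it. So requirement (i) of your plan fails for exactly the reason that requirement (ii) holds.

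The paper's proof runs the other way. It builds $G$ from $2k$ classes $C_0,\dots,C_{2k-1}$ of $2^{4k^2}$ cyclically indexed vertices, joined by ``forward'' edges of types $2^q$, with the powers of $2$ partitioned among the classes so that no class receives two of $2^q$, $2^{q+1}$, $2^{q+2}$, together with a dominating clique $K_{2k}$ in the middle. Seeing is then cheap: $k$ cops placed on half of the middle vertices and shifted to the other half on the first move sweep the closed neighbourhoods of the entire dominating set, giving $\cop'_1(G)\le k$. Capture is expensive: a case analysis on when sums and differences of powers of $2$ can coincide shows that a cop on the outside can block at most one type of forward move of an outside robber, so with at most $2k-1$ cops the robber always has an escape into some unguarded class, yielding $\cop_1(G)=2k$ and hence a difference of at least $k$. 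If you want to salvage your approach, your high-girth/minimum-degree idea is usable only for the \emph{lower} bound on the capture number (and note that the paper's \cref{thm:girth} shows girth $\ge 6$ would also drive up the seeing number, so the parameters are delicate); you would still need to supply a separate, cheap \emph{seeing} strategy --- some small dominating-like structure the cops can sweep in a bounded number of moves --- which is precisely the role played by the middle clique in the paper's construction.
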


\begin{proof}
    If $k=1$, then taking $G=C_4$ will suffice. Otherwise, we will construct
    $G$ in the following way:
    \begin{enumerate}
        \item
            For each integer $i$ with $0\le i\le 2k-1$, let $C_i$ be an ordered
            set of $2^{4k^2}$ elements and write $a_i$ for the $a^{\text{th}}$
            element of $C_i$. Add a vertex for each $a_i$, as $a$ and $i$ range
            over $\{0,1,\dots,2^{4k^2}-1\}$ and $\{0,1,\dots,2k-1\}$
            respectively.
        \item
            Associate each $C_i$ with the set $\{2^q:q\equiv i\pmod{2k}\text{
            and }0\leqslant q\leqslant4k^2-1\}$ of \emph{directions} (we in
            fact just need to take a regular $2k$-partition of those powers of
            2 such that no two of $2^q$, $2^{q+1}$, and $2^{q+2}$ are in the
            same $2k$-set).
        \item
            Add an edge between $a_i$ and $(a+2^q)_{i'}$ for each $i'\neq i$
            and each direction $2^q$ associated to $C_i$ (where the addition
            $a+2^d$, here and below, is computed modulo $2^{4k^2}$); such an
            edge will be called a \emph{forward} edge of $a_i$ of \emph{type}
            $2^q$ and a \emph{backward} edge of $(a+2^d)_{i'}$, again of type
            $2^q$ (but note that these are undirected edges). See \cref{fig:1}.
            \begin{figure}[!ht]
                \centering
                \includegraphics[width=12cm]{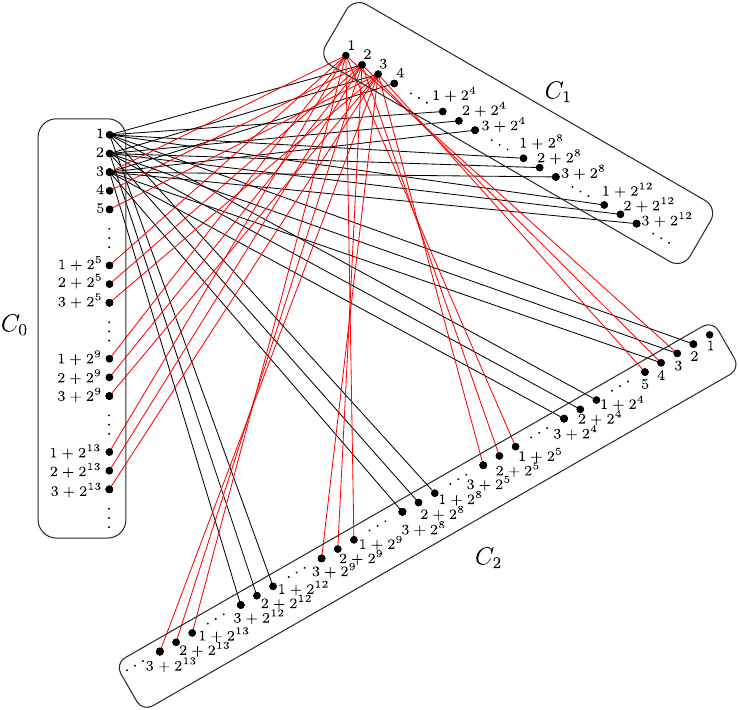}
                \caption{
                    Forward edges from the vertices $1$, $2$, and $3$ of $C_0$
                    that are going to $C_1$ and $C_2$ are shown here in black.
                    Forward edges from the vertices $1$, $2$, and $3$ of $C_1$
                    going to $C_0$ and $C_2$ are shown in red. The other edges
                    and components are omitted for clarity.
                }
                \label{fig:1}
            \end{figure}
        \item
            Add a $K_{2k}$, with vertices enumerated $v_1,\dots,v_{2k}$, and,
            for every $i$, add an edge between $v_i$ and each vertex in $C_i$.
            We will refer to the vertices in this $K_{2k}$ as the \emph{middle}
            of $G$, and the other vertices as the \emph{outside} of $G$. See
            \cref{fig:2}.
            \begin{figure}[!ht]
                \centering
                \includegraphics[width=10cm]{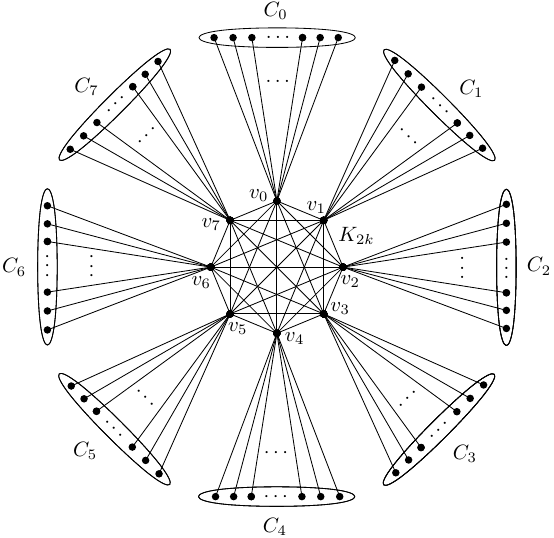}
                \caption{
                    All the edges of $G$ incident to the vertices of $K_{2k}$.
                    In the example shown here $k=4.$
                }
                \label{fig:2}
            \end{figure}
    \end{enumerate}

    Since the vertices in the middle of $G$ form a dominating set, it is clear
    that $\cop'_1(G)\leqslant   k$ simply by placing $k$ cops on distinct
    vertices in the middle of $G$ and then moving them to all of the remaining
    empty vertices in the middle on the first turn. Similarly, placing $2k$
    cops on distinct vertices in the middle will occupy the entire dominating
    set, and hence $\cop_1(G)\leq2k$. We will show now that $\cop_1(G)=2k$,
    which will yield a difference of $\cop_1(G)-\cop'_1(G)\geqslant k$, and
    hence also the result.

    We will consider when the robber is on an outside vertex $a_i$. Let a cop
    be on an outside vertex $b_j$ (where $a_i\neq b_j$). We claim that at most
    one type of forward edge of $a_i$ is incident with the closed neighbourhood
    of $b_j$. For the sake of contradiction, suppose this is not true and let
    $2^q$ and $2^{q'}$, $q\neq q'$, be distinct forward edges of $a_i$ whose
    other endpoints are both in the closed neighbourhood of $b_j$.

    Consider first when $b_j$ is itself the other endpoint of a forward edge of
    $a_i$. So,
    \[
        a+2^q\equiv b\pmod{2^{4k^2}}.
    \]
    We now distinguish two cases, based on whether the other forward edge from
    $a_i$ collides with:
    \begin{enumerate}
        \item[Case 1:] a forward edge of $b_j$

            We must have
            \begin{align*}
                &a+2^{q'}\equiv b+2^r\pmod{2^{4k^2}},\quad q\neq q'\\
                \implies\qquad&2^q\equiv2^{q'}-2^r\pmod{2^{4k^2}}\\
                \implies\qquad&2^q+2^r=2^{q'}\\
                \implies\qquad&q=r=q'-1,
            \end{align*}
            which contradicts $2^q$ and $2^{q+1}=2^{q'}$ not both being types
            of $C_i$.

        \item[Case 2:] a backward edge of $b_j$

            We must have
            \begin{align*}
                &a+2^{q'}\equiv b-2^t\pmod{2^{4k^2}},\quad q\neq q'\\
                \implies\qquad&2^q=2^{q'}+2^t\\
                \implies\qquad&q'=t=q-1,
            \end{align*}
            which contradicts $2^{q'}$ and $2^{q'+1}=2^q$ not both being types
            of $C_i$.
    \end{enumerate}
    So, if $b_j$ is the other endpoint of a forward edge of $a_i$, then it is
    incident with no other types of forward edges of $a_i$.

    Now suppose that $b_j$ is not the other endpoint of a forward edge of
    $a_i$. We again distinguish several cases based on the type of edges that
    connect $b_j$ to $a+2^q,a+2^{q'}$.
    \begin{enumerate}
        \item[Case 1:] $b_j$ is adjacent to $a+2^q,a+2^{q'}$ via two forward
            edges

            We must have
            \begin{align*}
                a+2^q&\equiv b+2^r\pmod{2^{4k^2}},\\
                a+2^{q'}&\equiv b+2^{r'}\pmod{2^{4k^2}},\quad r\neq r'.
            \end{align*}
            But it then follows that
            \begin{align*}
                &2^q-2^r\equiv2^{q'}-2^{r'}\pmod{2^{4k^2}}\\
                \implies\qquad&2^q+2^{r'}=2^{q'}+2^r\\
                \implies\qquad&q=r\text{ and }r'=q',
            \end{align*}
            which implies $i=j$, and so $a\neq b$ (since $a_i\neq b_j$),
            contradicting $a+2^q\equiv b+2^r\pmod{2^{4k^2}}$.
        \item[Case 2:] $b_j$ is adjacent to $a+2^q,a+2^{q'}$ via a forward edge
            and a backward edge

            We must have
            \begin{align*}
                a+2^q&\equiv b+2^r\pmod{2^{4k^2}},\\
                a+2^{q'}&\equiv b-2^{t}\pmod{2^{4k^2}},\quad r\neq t.
            \end{align*}
            But it then follows that
            \begin{align*}
                &2^q-2^r\equiv2^{q'}+2^{t}\pmod{2^{4k^2}}\\
                \implies\qquad&2^q=2^{q'}+2^t+2^r\\
                \implies\qquad&|t-r|=1,\, q'=\min(t,r),\, q=q'+2,
            \end{align*}
            where the last implication holds since $r\neq t$ (a forward and a
            backward edge of the same vertex cannot be of the same type by
            construction), which contradicts $2^{q'}$ and $2^{q'+2}=2^q$ not
            both being types of $C_i$.
        \item[Case 3:] $b_j$ is adjacent to $a+2^q,a+2^{q'}$ via two backward
            edges

            We must have
            \begin{align*}
                a+2^q&\equiv b-2^t\pmod{2^{4k^2}},\\
                a+2^{q'}&\equiv b-2^{t'}\pmod{2^{4k^2}},\quad t\neq t'.
            \end{align*}
            But it then follows that
            \begin{align*}
                &2^q+2^t=2^{q'}+2^{t'}\\
                \implies\qquad&q=t'\text{ and }t=q',
            \end{align*}
            which implies these two backward edges of $b_j$ are forward edges
            from $C_i$, contradicting the fact that the forward edges of $a_i$
            leave $C_i$.
    \end{enumerate}

    Thus, a cop on the outside of the graph can block at most one type of
    forward move of the robber (placed on the outside).

    Suppose now that we have $2k-1$ cops on the graph. If all $2k-1$ cops
    occupy distinct middle vertices and are not adjacent to the robber, then
    the robber may remain stationary and avoid capture for another turn.
    Otherwise, there must exist some $C_j$ that is not adjacent to a cop in the
    middle, where the robber is not in $C_j$. The robber has $2k$ types of
    forward moves to this $C_j$. Since there are no cops on $v_j$ (i.e.\ in the
    middle, adjacent to $C_j$), it follows that these $2k$ types need to be
    blocked by cops on the outside. But we have just shown that a cop on the
    outside can block at most one type of forward move of the robber.

    Thus, if there are fewer than $2k$ cops, then the robber has a winning
    strategy: staying on the outside, the robber will always have at least one
    move available to a vertex that is not in the neighbourhood of any cop. We
    saw earlier that $2k$ cops do indeed suffice to capture the robber, and
    hence we obtain $\cop_1(G)=2k$.
\end{proof}

Note that our construction builds a graph of diameter 3. However, we can use a
similar argument as before (with \cref{lem:l>1}) to increase the diameter
arbitrarily: simply add a vertex $u$ and a path of length $d-1$ from $u$ to the
vertex $v_0$ of the middle of $G$ to obtain $G'$. Then, $\diam G'\ge d$ and
$\cop_1(G')\ge\cop_1(G)$, and the strategy to show $\cop_1'(G') \leqslant k$ is
roughly the same as before: one cop starts from $u$ and moves towards $v_0$ in
the first $d-1$ turns, while the other cops remain stationary on vertices
$v_1,\dots,v_k$ in the middle of $G$. Then, in the $d^{\text{th}}$ turn, all
$k$ cops move to the remaining $k$ vertices of the middle, at which point they
guarantee seeing the robber.

Notice also that \cref{lem:l=1} gives us a graph such that
$\cop_1(G)\geq2\cop'_1(G)$. We believe that the ratio is exactly 2 for this
construction, but it is unclear whether it can be made arbitrarily large in
general. For the case $l>1$ (\cref{lem:l>1}), our construction did indeed allow
us to find graphs $G$ such that $\cop_l(G)\geqslant r\cop'_l(G)$ for
arbitrarily large $r$ (recall that we only needed one cop to see the robber).

\begin{problem}
    What is the largest $r$ for which there exists a graph $G$ with
    $\cop_1(G)\geqslant r\cop'_1(G)$?
\end{problem}

We know by our construction in \cref{lem:l=1} that the largest such $r$ must be
at least 2.

\begin{theorem}
    \label{thm:diff}
    If $l$ and $k$ are positive integers, then there exists a graph $G$ such
    that $\cop_l(G)-\cop'_l(G)\geqslant k$.
\end{theorem}

\begin{proof}
    This follows immediately from \cref{lem:l=1,lem:l>1}.
\end{proof}

This result was also answered independently and concurrently by Bill Kinnersley
and John Jones \cite{kinnersley:personal}, but their construction was markedly
different (making use of Hamming graphs with far fewer vertices, but obtaining
a smaller ratio than our 2 for the case $l=1$).

Note that, for all $l\ge1$, we have $\cop_l(G)\ge\cop(G)$. And so, in our
construction in \cref{lem:l=1}, we have $\cop_l(G)\ge2k$ for all $l\ge1$
(because we in fact proved that $\cop(G)=2k$, not just $\cop_1(G)=2k$). On the
other hand, increasing the visibility can only decrease the number of cops
required to see the robber: $\cop_l(G)\le\cop_1(G)\le k$. As such, for our $G$
from the proof of \cref{lem:l=1}, we obtain $\cop_l(G)-\cop_l'(G)\ge2k-k=k$ for
all $l\ge 1$. It is for this reason that our \cref{lem:l=1} essentially proves
the entire result (\cref{thm:diff}); the results of \cite{bonato.burgess:cops}
are not necessary, but it is interesting to compare the complexities of the two
proofs for the cases $l=1$ (\cref{lem:l=1}) and $l>1$ (\cref{lem:l>1}).

\section{Trying to clean a graph}
\label{sec:cleaning}

We will now examine the game from an alternative viewpoint, foregoing the
existence of a robber in the graph. Consider what the cops see with limited
visibility: before the game begins, cops are stationary at their respective
starting positions and, in general, can only see a limited, ``clean'' part of
the graph. Everything beyond that---including the robber's position---is
completely unknown. That is, the graph can be perceived as being covered in
fog, or gas.

As the cops move, the fog lifts as far as they can see. After the cops' turn,
the fog can move, but only slowly; after the cops' turn, the fog will spread to
all adjacent vertices not in sight of any cop. So, if there is a previously
visible vertex that, after the cops have moved, is no longer visible by any
cop, then it will become shrouded in fog now only if there is a neighbouring
vertex that is ``foggy'', or ``gaseous''.

Therefore, our game of limited visibility cops and robbers becomes a one-player
game with the goal of ``cleaning the graph of gas''. Indeed, this cleaning
viewpoint was also utilised in \cite{clarke.cox.ea:limited}. We formalise this
idea with the following definition.

\begin{definition}
    Given a graph $G$, the \emph{graph cleaning process} on $G$ with $k$
    cleaners and $l$-visibility proceeds as follows: $k$ cleaners are placed on
    the graph, and every vertex not in the closed $l$-neighbourhood of any of
    the cleaners is filled with gas; at each subsequent timestep, each cleaner
    is permitted to move to a vertex in its closed neighbourhood, cleaning all
    vertices in the new closed $l$-neighbourhood as well; after the cleaners
    have collectively ended their turn, every vertex not in the closed
    $l$-neighbourhood of the cleaners that is adjacent to a gaseous vertex will
    itself become gaseous (i.e.\ the gas will spread). (\cref{fig:C5} gives an
    example of one cop with 1-visibility trying to clean $C_5$.)
\end{definition}

\begin{figure}
    \begin{center}
        \includegraphics[width=5cm,page=10]{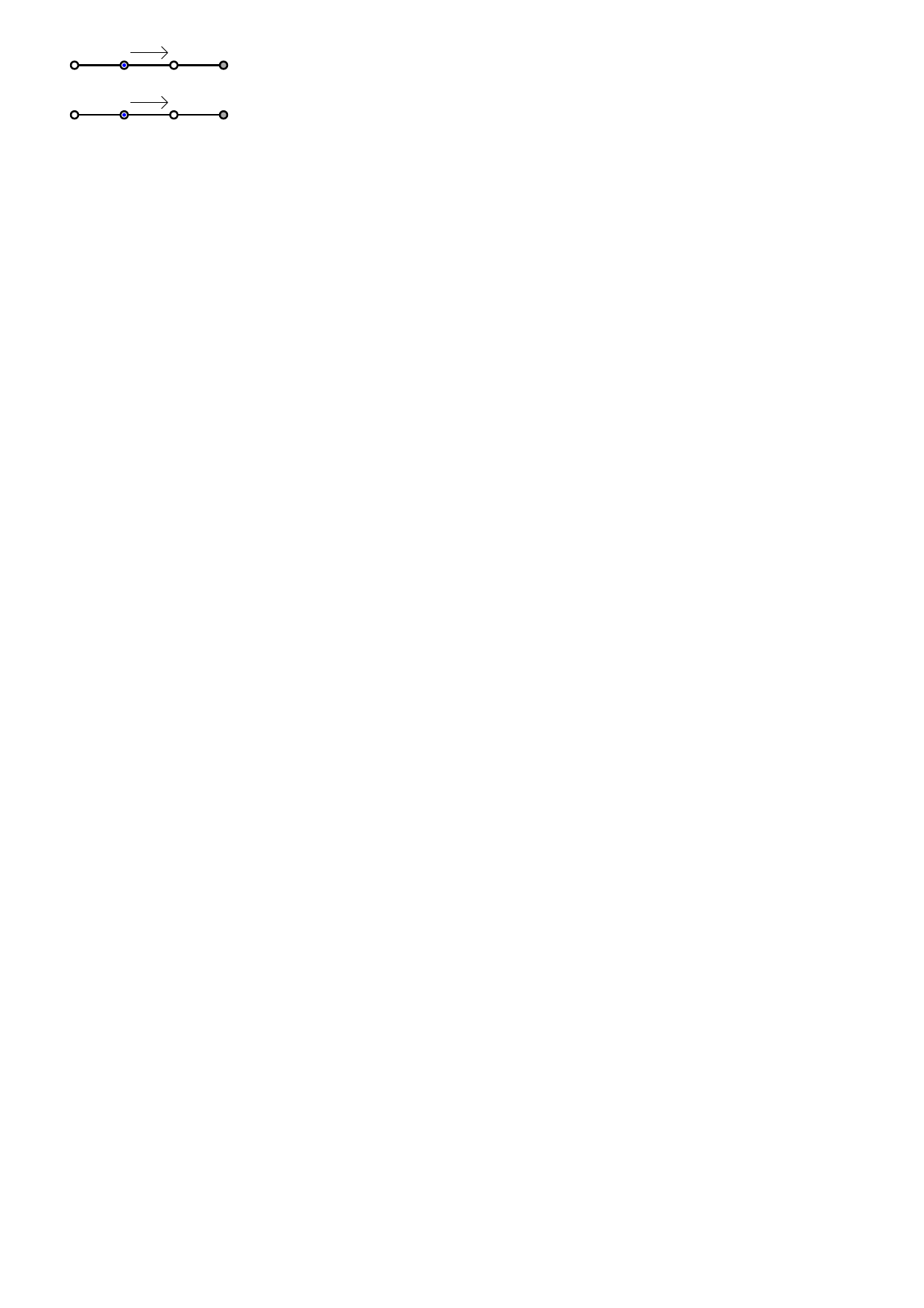}
        \caption{
            One cop with 1-visibility tries to clean a $C_5$. They can clean
            all but one vertex before the gas spreads back to the initial state
            of 2 gaseous vertices. (Blue vertices denote the vertices on which
            the cops are stationed, while the grey ones are the gaseous
            vertices, and the white ones are clean.) The picture shows the game
            state immediately after the cop moved (indicated by the normal
            arrow) and before the gas spreads (indicated by the dashed arrow).
        }
        \label{fig:C5}
    \end{center}
\end{figure}

It is immediate from the definition that $\cop_l'(G)$ is equal to the minimum
number of cleaners needed to clean $G$ of all gas (which explains why we will
still refer to cleaners as cops in some places). Following this point of view,
it is natural to ask how the number of cleaners would change if instead of
cleaning all vertices in $G$, one would require cleaning all \emph{but} at most
$r$ vertices for some $r>0$. In the original formulation, this would be the
same as asking how many possible locations we can narrow the robber to be in;
perhaps being able to say that there are only, say, two possible places the
robber can be is valuable information, even though we cannot locate them
precisely.

\begin{definition}
    \label{def:inference}
    Given a graph $G$, we define the $(r,l)$\emph{-inference number} of $G$,
    written $\ded^r_l(G)$, as the minimum number of cleaners with
    $l$-visibility such that, at some point during the graph cleaning process
    on $G$, there are at most $r$ vertices with gas.
\end{definition}

In fact, the inference number can only change by as much as the value of the
parameter $r$ has changed.

\begin{proposition}
    For every positive integer $l$, if $G$ is a graph and $r\leqslant s$, then
    $\ded^r_l(G)-\ded^s_l(G)\leqslant s-r$.
\end{proposition}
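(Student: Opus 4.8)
The plan is to prove the equivalent inequality $\ded^r_l(G)\le\ded^s_l(G)+(s-r)$; since $r\le s$, this is exactly the claimed bound. The idea is to take an optimal cleaning strategy witnessing $\ded^s_l(G)$ and cheaply upgrade it into one that leaves at most $r$ vertices gaseous, by adding at most $s-r$ extra cleaners. Concretely, let $N=\ded^s_l(G)$ and fix a strategy using $N$ cleaners that, at some timestep $T$, leaves a gaseous set $S$ with $|S|\le s$. If $|S|\le r$ we are already done, since the same strategy witnesses $\ded^r_l(G)\le N\le N+(s-r)$; so assume $|S|>r$. Choose any $|S|-r$ of the vertices in $S$ and add one new cleaner on each of them, keeping these extra cleaners stationary for the entire process. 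This uses $N+(|S|-r)\le N+(s-r)$ cleaners in total.

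The crux is a monotonicity observation: if we keep the trajectories of the original $N$ cleaners unchanged and add further cleaners (with arbitrary trajectories), then the gaseous set at every timestep $t$ can only shrink. I would prove this by induction on $t$. At $t=0$ the clean region is the union of the closed $l$-neighbourhoods of the starting positions, so adding cleaners enlarges the clean region and hence shrinks the initial gas. For the inductive step, observe that the gaseous set after one step is an order-preserving function of the previous gaseous set (more gas in, more gas out) and an order-reversing function of the currently cleaned region $N_l[P]$ (more cleaning, less gas). Since adding cleaners yields a smaller previous gaseous set by the inductive hypothesis, together with a larger cleaned region at the current step, the new gaseous set is contained in the old one. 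This argument is insensitive to the precise convention for how the gas spreads within a single step, since a monotone set-operator has a monotone least fixpoint.

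Applying monotonicity to our augmented strategy, the gaseous set at time $T$ is contained in the original gaseous set $S$. Moreover, each of the $|S|-r$ chosen vertices carries a stationary cleaner, so it lies in that cleaner's closed $l$-neighbourhood at every timestep and is therefore clean at time $T$. Hence the gaseous set at time $T$ is contained in $S$ with the chosen vertices removed, leaving at most $|S|-(|S|-r)=r$ gaseous vertices. Thus the augmented strategy witnesses $\ded^r_l(G)\le N+(|S|-r)\le\ded^s_l(G)+(s-r)$, as required. I expect the only genuine work to be the monotonicity lemma; everything else is bookkeeping, and the lemma is routine once one checks carefully that adding cleaners never uncovers a vertex that the original cleaners had covered.
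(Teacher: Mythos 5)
Your proof is correct and follows essentially the same route as the paper: augment an optimal $\ded^s_l(G)$-strategy with at most $s-r$ stationary cleaners placed on gaseous vertices at the witnessing timestep, and invoke monotonicity (extra cleaners never increase the gaseous set). The only difference is that you spell out the monotonicity claim as an explicit induction, whereas the paper states it as a one-line observation.
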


\begin{proof}
    We will show that $\ded_l^r(G)\le \ded_l^s(G)+s-r$ by showing that
    $\ded_l^s(G)+d$ many cleaners for some $d\le s-r$ can clean all but at most
    $r$ vertices of $G$. So, take any strategy of $\ded_l^s(G)$ cleaners that
    leaves at most $s$ vertices gaseous, and let $v_1,\dots,v_t$ for $0\le t\le
    s$ be all vertices that are left gaseous at the first instance $i_0$ where
    there are at most $s$ gaseous vertices. Note that, in the gas cleaning
    process, there is no adversary, and hence any strategy for the cleaners
    defines a deterministic process; so, this instance $i_0$ is well-defined
    given some optimal strategy for the cleaners.

    Now we alter the strategy by placing an additional $d=\min(s-r,\,t)$
    cleaners on vertices $v_1,\dots,v_{d}$ at the beginning of the game and
    leaving them stationary in all subsequent moves, while the remaining
    $\ded_l^s(G)$ cleaners move following the original strategy. Note that any
    vertex that was clean at some point in the original strategy will be clean
    at the same point following the altered strategy, since the additional
    cleaners can only hamper the spreading of the gas. But we have at least $d$
    additional vertices cleaned at instance $i_0$: these are simply the
    vertices $v_1,\dots,v_d$, where we placed the additional cleaners. In
    total, we found a strategy with $\ded_l^s(G)+d$ cleaners that cleans all
    but at most $t-d\le r$ vertices, which yields the result.
\end{proof}

Note that we will not actually always need the extra $s-r$ cleaners, and we
could gain huge improvements depending on what the exact configuration of the
$m$ gaseous vertices is at that timestep: if the $s$ vertices are a clique,
then we need only one extra cleaner (more generally, it would suffice to
dominate an $(s-r)$-subset of $s$-set, for example).

\begin{corollary}
    If $G$ is a graph, then $\cop'_l(G)-\ded^1_l(G)\leqslant   1$ for all
    positive integers $l$.
\end{corollary}

Observe that $\ded^1_l(G)$ is the minimum number of cleaners such that we can
be certain of the robber's location at some point: the cops might not
\emph{see} the robber, but they have narrowed down the location to exactly one
vertex, and hence the robber must either be there, or else they must have been
seen before. So, the corollary above shows that the difference between wanting
to \emph{see} the robber versus wanting to be certain of the robber's location
is worth at most 1 extra cop.

It might not be immediately obvious that there exists a graph where we know
where the robber is without being able to see them---i.e.\ that there exists a
graph $G$ with $\cop'_l(G)-\ded^1_l(G)=1$---but indeed there does! Our graph
$C_5$ from \cref{fig:C5} illustrates that $\cop'_1(C_5)=2$ but
$\ded^1_1(C_5)=1$. We discuss this again briefly in \cref{sec:prob} during a
probabilistic digestif.

The graph $C_5$ appears to be more than just a simple example illustrating the
difference between being able to see the robber and knowing their location.
Indeed, if the reader starts to investigate the following open problem
(particularly with $l=1$), then they will see it cropping up frequently, but it
is not clear exactly what is going on.

\begin{problem}
    Characterise the graphs $G$ for which $\cop'_l(G)=\ded^1_l(G)$.
\end{problem}

\subsection{Cleaning on a budget}

If we do not have enough cops to clean a graph completely, one might still ask
the following question: what is the best we can do? That is, how \emph{close}
can we get to completely cleaning the graph? One could imagine that this is a
much more practical scenario, where perhaps there is not enough budget to hire
a sufficient number of cops, and so we are interested in how we can best make
do with what we have. This might be considered a dual problem to the inference
number $\ded^r_l$ in the last subsection (\cref{def:inference}).

We will give a few basic results on how well we can clean with general
parameters, and then we focus on having two 1-visibility cops.

The simplest case is when we have just a single $l$-visibility cop. Recall that
$\Delta(G)$ is used to denote the maximum degree of a graph $G$. That is,
$\Delta(G)=\max_{v\in G}(|N(v)|)$, where $N(v)$ is the set of neighbours of
$v$. We write $N_k(v)$ for the set of $k$-neighbours of $v$: the set of
vertices (distinct from $v$) of distance at most $k$ from $v$. So,
$N(v)=N_1(v)$. We then define $\Delta_k(G)=\max_{v\in G}(|N_k(v)|)$.

\begin{proposition}
    If $G$ is a graph, then one $l$-visibility cop can always clean at least
    $\Delta_l(G)+1$ vertices. Additionally, if $G$ is connected, then they can
    clean either the whole graph or else at least $\Delta_l(G)+2$ vertices.
\end{proposition}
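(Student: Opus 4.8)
The plan is to handle the two assertions in turn, starting from the trivial observation that the very first configuration of the cleaning process already yields a large clean set. For the first part, I would place the single cop on a vertex $v$ attaining the maximum, so that $|N_l(v)| = \Delta_l(G)$. By the definition of the cleaning process, every vertex in the closed $l$-neighbourhood $\{v\}\cup N_l(v)$ is clean the moment the cop is placed, and this set has exactly $\Delta_l(G)+1$ vertices. Since ``cleaning'' a number of vertices only requires that at \emph{some} point during the process that many vertices are simultaneously gas-free, the initial placement alone certifies $\Delta_l(G)+1$ clean vertices, with no movement needed.

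For the second part I keep the cop on the same maximiser $v$ and split on whether its ball is everything. If $\{v\}\cup N_l(v)=V(G)$, then the whole graph is clean at placement and we are done. Otherwise, using that $G$ is connected and the ball is a proper subset, I would locate a vertex $w$ with $\dist(v,w)=l+1$: any vertex outside the ball lies at distance $>l$, and walking back along a shortest path from $v$ produces a vertex at each intermediate distance, in particular one at distance exactly $l+1$. Let $v'$ be the neighbour of $v$ on a shortest $v$--$w$ path, so that $\dist(v',w)=l$; then $w$ lies in the closed $l$-neighbourhood of $v'$ but not of $v$. The cop now makes a single move from $v$ to $v'$.

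The crucial point — and the step I expect to require the most care — is \emph{when} we count the clean vertices. Immediately after the cop moves to $v'$, but before the gas spreads, the clean set is the union of what was already clean, namely $\{v\}\cup N_l(v)$, together with the newly swept $\{v'\}\cup N_l(v')$, which contains $w$. Since $w\notin\{v\}\cup N_l(v)$, this set has at least $(\Delta_l(G)+1)+1=\Delta_l(G)+2$ vertices, as required. It is essential that we measure at this pre-spreading instant (precisely the kind of game state illustrated in \cref{fig:C5}): once the gas spreads, the vertices of the old ball that fell out of sight and border the gas can be re-contaminated, and in the worst case the clean set collapses back to the $l$-ball of $v'$, of size at most $\Delta_l(G)+1$. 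Thus the whole argument hinges on recognising that losing sight of a vertex does not itself re-gas it, so the transient configuration right after the move legitimately witnesses $\Delta_l(G)+2$ clean vertices.
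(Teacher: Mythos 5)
Your proof is correct and follows essentially the same route as the paper's: place the cop on a vertex $v$ maximising $|N_l(v)|$ to clean $\Delta_l(G)+1$ vertices immediately, then (if the graph is not already clean) move one step along a shortest path towards a vertex outside the ball to clean at least one more. Your explicit choice of a vertex at distance exactly $l+1$ and the remark that one counts at the instant after the move but before the gas spreads are just careful spellings-out of details the paper leaves implicit (and which its \cref{fig:C5} caption confirms is the intended convention).
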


\begin{proof}
    We place our cop on any vertex $v$ with $|N_l(v)|=\Delta_l(G)$, thus
    already cleaning $\Delta(G)+1$ vertices.

    Suppose further that $G$ is connected. Either we have cleaned the whole
    graph already, or else there must be some vertex $u$ outside of the
    $l$-neighbourhood of $v$. Thus, moving our cop one step towards $u$ will
    clean at least one additional vertex, giving the result.
\end{proof}

Note that this is the best we can hope for in general since, on any cycle, a
single $l$-visibility cop can clean either the whole graph (if the cycle is
sufficiently small), or else at most $2l+2=\Delta_l(C_n)+2$ vertices.

If we grant ourselves $k$ $l$-visibility cops, and our graph has sufficiently
many disjoint (connected) subgraphs $G_1,\dots,G_k$ (which are themselves
sufficiently large), then we can clean at least
$\sum_{i=1}^k\Delta_l(G)_i+2\cdot k$.

\begin{proposition}
    \label{prop:general-sub}
    If $G$ is a graph with at least $k$ disjoint, connected subgraphs
    $G_1,\dots,G_n$ where each $G_i$ has at least $\Delta_l(G_i)+2$ vertices,
    then $k$ $l$-visibility cops can clean at least
    $\sum_{i=1}^k\Delta_l(G_i)+2\cdot k$ (where $\Delta_l(G_i)$ is calculated
    using only vertices in $G_i$, rather than $G$ as a whole).
\end{proposition}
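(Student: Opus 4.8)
The plan is to reduce everything to the single-cop case treated in the preceding proposition, by assigning one cop to each of $k$ of the disjoint subgraphs and running the single-cop cleaning strategy inside each one independently. Concretely, pick any $k$ of the subgraphs, say $G_1,\dots,G_k$, and for each $i$ place a cop on a vertex $v_i\in V(G_i)$ attaining $|N_l(v_i)\cap V(G_i)|=\Delta_l(G_i)$. Each cop then executes, entirely within its own $G_i$, the two-step strategy from the preceding proposition: it cleans its closed $l$-neighbourhood upon placement, and then takes one step towards some vertex of $G_i$ that it has not yet cleaned. Since each $G_i$ is connected and has at least $\Delta_l(G_i)+2$ vertices, such a vertex exists and the step is available, so the single-cop argument guarantees that cop $i$ cleans at least $\Delta_l(G_i)+2$ vertices of $G_i$.

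The first key point is that the $G_i$ are \emph{vertex-disjoint}: every vertex cleaned by cop $i$ lies in $V(G_i)$, so it can never be counted for a different cop. Hence the vertices cleaned by the $k$ cops are pairwise distinct and their counts simply add, giving at least $\sum_{i=1}^k(\Delta_l(G_i)+2)=\sum_{i=1}^k\Delta_l(G_i)+2k$ cleaned vertices in total, which is exactly the asserted bound.

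The step I expect to require the most care is checking that embedding each $G_i$ inside the ambient graph $G$ does not spoil the single-cop count, since edges of $G$ lying outside the $G_i$ let gas spread between the subgraphs and may also create distance shortcuts. Two observations settle this. First, each counted vertex is, at the timestep we count it, inside the acting cop's closed $l$-neighbourhood (taken in $G$) and is therefore genuinely clean there by the definition of the cleaning process; that such a vertex might later become gaseous again because of gas arriving from another subgraph is irrelevant, as the claim concerns vertices that are cleaned at some point of the process, exactly as in the single-cop case. Second, writing $N_l^{G_i}[v]$ for the closed $l$-neighbourhood of $v$ computed within $G_i$, and setting $S_0=N_l[v_i]\cap V(G_i)$ and $S_1=N_l[v_i']\cap V(G_i)$ for the sets cleaned on placement and after the move (with distances measured in $G$), a shortcut through $V(G)\setminus V(G_i)$ can only enlarge these sets: we always have $S_0\supseteq N_l^{G_i}[v_i]$ and $S_1\supseteq N_l^{G_i}[v_i']$. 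A short case analysis—mirroring the single-cop proof, and distinguishing whether the new front vertex already lies in $S_0$—then shows $|S_0\cup S_1|\ge\Delta_l(G_i)+2$ in every case. Summing these per-subgraph bounds over the disjoint $G_i$ yields the proposition.
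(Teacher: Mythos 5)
Your proof is correct and follows essentially the same approach as the paper's: place each cop on a vertex $v_i$ of maximal $l$-degree within its $G_i$, move one step towards an uncleaned vertex of $G_i$ (which exists by connectivity and $|V(G_i)|\geq\Delta_l(G_i)+2$), and sum the per-subgraph counts using vertex-disjointness. Your extra care about shortcuts through $V(G)\setminus V(G_i)$ is a point the paper's proof passes over silently, and your observation that such shortcuts can only enlarge the cleaned sets correctly settles it.
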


\begin{proof}
    Place each cop on a vertex $v_i$ in $G_i$ where $|N_l(v_i)|=\Delta_l(G_i)$
    (restricted to vertices only in $G_i$). Since the $G_i$ are disjoint, the
    cops have cleaned at least $\sum_{i=1}^k\Delta_l(G_i)+k$ vertices. Since
    each $G_i$ is connected and has at least $\Delta_l(G_i)+2$ vertices, there
    exists a vertex $u_i$ in each $G_i$ that is not in $|N_l(v_i)|$.
    Simultaneously, each cop will move one step on a path towards their
    respective $u_i$, thus ensuring that at least an additional $k$ vertices
    are cleaned, which makes a total of $\sum_{i=1}^k\Delta_l(G_i)+2\cdot k$.
\end{proof}

Of course, dropping the `connected' hypothesis, or that subgraphs need be
sufficiently large, will just reduce our lower bound to $\Delta_l(G_i)+k$.

As a simple corollary, to preface our main result for this section, consider
when a graph has at least $k$ disjoint $P_{2l+2}$ subgraphs.

\begin{proposition}
    If $G$ is a graph with at least $k$ disjoint $P_{2l+2}$ subgraphs, then $k$
    $l$-visibility cops can clean at least $k\cdot(2l+2)$ vertices.
\end{proposition}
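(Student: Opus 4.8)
The plan is to apply \cref{prop:general-sub} directly, using each $P_{2l+2}$ as one of the disjoint connected subgraphs $G_i$. The only genuine content is the computation of $\Delta_l$ for a path on $2l+2$ vertices, after which the stated bound falls out of the arithmetic already carried out in \cref{prop:general-sub}.

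First I would fix notation: let $G_1,\dots,G_k$ be $k$ vertex-disjoint copies of $P_{2l+2}$ inside $G$, and label the vertices of a single such path as $1,2,\dots,2l+2$ in order along it. I would then compute $\Delta_l(P_{2l+2})$, calculated within the path alone. A vertex at position $p$ sees $\min(l,p-1)$ vertices to its left within distance $l$ and $\min(l,2l+2-p)$ to its right, so $|N_l(p)|\le 2l$, with equality exactly when $p\in\{l+1,l+2\}$, since these are precisely the positions having at least $l$ vertices on each side. Hence $\Delta_l(P_{2l+2})=2l$.

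Next I would verify the hypothesis of \cref{prop:general-sub}: each $G_i$ is connected, the $G_i$ are disjoint by assumption, and each has exactly $2l+2=\Delta_l(G_i)+2$ vertices, so the size requirement is met (with equality). Applying \cref{prop:general-sub} then yields that $k$ $l$-visibility cops can clean at least
\[
    \sum_{i=1}^k\Delta_l(G_i)+2k=\sum_{i=1}^k 2l+2k=k(2l+2)
\]
vertices, which is exactly the claim.

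I do not anticipate a real obstacle, as the statement is a corollary; the single point requiring care is confirming that $\Delta_l(P_{2l+2})=2l$ rather than some smaller value, which amounts to checking that a central vertex genuinely has $l$ vertices available on each side (this is why the path needs $2l+2$ vertices and no fewer). Since $k(2l+2)$ is exactly the total number of vertices across the $k$ paths, it is worth remarking that the cops in fact clean all of the path vertices; this also shows the bound is tight for this configuration.
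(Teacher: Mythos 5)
Your proposal is correct and follows essentially the same route as the paper: the paper's proof is a one-line application of \cref{prop:general-sub} after noting $\Delta_l(P_{2l+2})=2l$, which you have simply carried out with the computation of $\Delta_l$ and the verification of the size hypothesis made explicit. Your closing remark that the bound $k(2l+2)$ accounts for every vertex of the $k$ paths is a correct (if unstated in the paper) observation about tightness for this configuration.
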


\begin{proof}
    Clearly $\Delta_l(P_{2l+2})=2l$, and hence \cref{prop:general-sub} yields
    that our cops can clean at least $2l\cdot k + 2\cdot k=k\cdot(2l+2)$.
\end{proof}

In particular, if we have a graph $G$ with at least 2 disjoint $P_4$ subgraphs,
then two 1-visibility cops can clean at least 8 vertices. We will show now
that, for a connected graph, two 1-visibility cops can \emph{always} clean at
least 8 vertices, if not the whole graph. We will conjecture later
(\cref{conj:10}) that we can always clean at least 10 vertices, and so in some
cases of the proof we will show that we can indeed clean 10.

\begin{theorem}
    \label{thm:>=8}
    If $G$ is a connected graph, then two $1$-visibility cops can always clean
    either at least $8$ vertices of $G$, or else the whole graph.
\end{theorem}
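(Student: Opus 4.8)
The plan is to show that two 1-visibility cops can always clean at least 8 vertices of a connected graph $G$ (or the whole graph). The key structural observation is that a single 1-visibility cop starting at a vertex $v$ immediately cleans $|N(v)|+1$ vertices (the closed neighbourhood), and, in a connected graph with enough uncleaned vertices, moving toward an uncleaned vertex cleans at least one more. The task is to leverage two cops so that their combined cleaning reaches at least 8. A natural approach is to split into cases based on the maximum degree $\Delta(G)$ and on whether there exist two vertices that are ``far enough apart'' to let the cops act almost independently.

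First I would dispose of the easy high-degree cases. If $\Delta(G)\ge 7$, then placing a single cop on a vertex $v$ of maximum degree already cleans $|N(v)|+1\ge 8$ vertices, so we are done with one cop to spare. More generally, if the graph contains two disjoint $P_4$ subgraphs, then by the preceding proposition two 1-visibility cops clean at least $8$ vertices, so I would look for such a pair of paths; the remaining work is precisely the case where $G$ is connected but ``thin'' — low maximum degree and without two disjoint $P_4$'s — so that the cops' neighbourhoods are forced to be small and to overlap. The core of the argument is therefore to analyse connected graphs of small maximum degree (essentially $\Delta\le 6$, and in the genuinely hard range $\Delta\le 3$ or so) and argue that, even with overlap, the two cops can be positioned and then moved one step each to clean $8$ distinct vertices.

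The main technical step, which I expect to be the chief obstacle, is handling the low-degree, highly-overlapping regime. Here the strategy would be: pick two starting vertices $v_1,v_2$ for the cops that are at distance as large as possible (so their closed neighbourhoods are disjoint, giving $|N(v_1)|+|N(v_2)|+2$ already cleaned), then use connectivity to move each cop one step toward an uncleaned vertex, cleaning more. The difficulty is when $G$ is so small or so ``bunched'' that two disjoint closed neighbourhoods summing to $\ge 6$ are unavailable and the one-step moves do not add enough new vertices because the gas spreads back. I would therefore track the gas dynamics carefully over the first one or two rounds, showing that in a connected graph either the whole graph gets cleaned quickly (if it is small) or there is always an uncleaned ``frontier'' vertex that a cop can march toward, netting fresh cleaned vertices faster than the gas can re-encroach. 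Because the conjectured bound is $10$, I would, where convenient, prove the stronger count of $10$ to streamline the bookkeeping and isolate the genuinely tight configurations where only $8$ is achievable.

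Concretely, the proof would be organised as a finite case analysis: (i) $\Delta(G)\ge 7$ (immediate); (ii) two disjoint $P_4$'s exist (apply the proposition); (iii) the residual connected graphs with no two disjoint $P_4$'s and $\Delta(G)\le 6$, which are structurally constrained (they are ``close to'' a bounded family — stars, small trees, short paths, small cycles, and their mild augmentations), for which one verifies directly that two cops clean $\ge 8$ (or the whole graph). The crux is to enumerate or characterise family (iii) tightly enough that the per-case cleaning counts are routine; I expect the bulk of the effort — and the risk of missing a configuration — to lie in that characterisation and in correctly accounting for gas re-spreading during the cops' second move.
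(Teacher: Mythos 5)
Your two reductions are sound: if $\Delta(G)\geq 7$, a single cop cleans $|N(v)|+1\geq 8$ vertices immediately, and if $G$ contains two disjoint $P_4$ subgraphs, the earlier proposition (the corollary of \cref{prop:general-sub} with $l=1$, $k=2$) gives $8$ cleaned vertices --- indeed the paper itself flags this corollary as the preface to the theorem. The residual family is even finite in a strong sense: two disjoint $P_4$'s exist inside any $P_8$, so your case (iii) consists of connected graphs with $\Delta\leq 6$ and no path on $8$ vertices, which have bounded radius and hence bounded order. So the skeleton is viable in principle.

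But the proposal has a genuine gap: case (iii) is precisely where the entire content of the theorem lives, and you do not execute it. Your claimed characterisation of the residual graphs as ``stars, small trees, short paths, small cycles, and their mild augmentations'' is not established and is too optimistic --- the family includes, for instance, spiders with up to six legs of length $2$, double-star-like graphs, and graphs where every $P_4$ passes through one small hub, and for each such configuration one must exhibit concrete cop placements and one-step moves, accounting for gas re-spreading (which you only gesture at with ``track the gas dynamics carefully''). This is exactly the bulk of the paper's proof, which takes a different and more workable decomposition: it classifies by the pairwise distances between vertices of degree at least $3$ (at most one such vertex; all such vertices pairwise at distance $1$; some pair at distance $\geq 3$; all pairs at distance $\leq 2$ with some pair at distance exactly $2$, further split by the number of common neighbours), and in each case gives an explicit two-cop strategy, often cleaning the whole graph or even $10$ vertices. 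The paper's classification keeps each case's strategy short because the structure (clique hubs with pendant paths and cycles, two hubs with $1$, $2$, or $\geq 3$ common neighbours) directly dictates where to place the cops; your reduction to a finite family, by contrast, leaves an unbounded-looking (though technically finite) enumeration with no per-case argument, and you yourself concede ``the risk of missing a configuration.'' As it stands, the proposal is a plan rather than a proof: cases (i) and (ii) are correct but easy, and the decisive case is missing.
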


\begin{proof}
    We proceed via case analysis.
    \begin{enumerate}
        \item {\bf $G$ has at most one vertex $v$ with $d(v)\geq3$.}

            In this case, $G$ is either a spider, a cycle, or the union of
            cycles and paths that are connected at a single vertex.

        \item {\bf Any two vertices $u,v$ with $d(u)\geqslant 3$ and $d(v)
            \geqslant 3$ are at distance~$1$.}

            In this case, $G$ consists of a clique $K_k$ and disjoint cycles
            and paths which are connected to this $K_k$ (see \cref{fig:case2}).
            Again, it is easy to check that two cops can clean the entirety of
            $G$ in this case: one cop can completely clean the $K_k$ by being
            placed on any of its vertices, while at the same time preventing
            the gas from spreading from any vertex outside of the clique to the
            clique; while this cop remains stationary, the other cop can
            successively clean each of the disjoint paths and cycles.

            \begin{figure}
                \begin{center}
                    \includegraphics[width=7cm,
                    page=2]{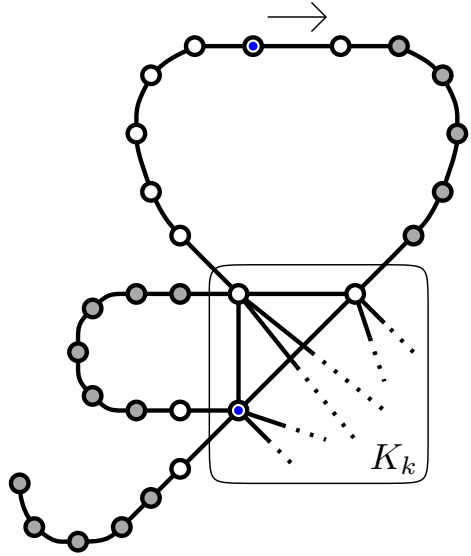}
                    \caption{
                        All vertices with degree at least 3 are at distance 1
                        from each other, and hence form a clique.
                    }
                    \label{fig:case2}
                \end{center}
            \end{figure}

\item {\bf $G$ has at least two vertices $u,v$ with $d(u)\geqslant 3$ and $d(v)
    \geq 3$ which are at distance at least $3$ from each other.}

We will start by placing both cops on $u$ and $v$. Either $u,v$, or any of its
neighbours will have other neighbours in the graph, since the graph is
connected. If $u$ and $v$ are of degree exactly $3$, we can either find a
neighbour of $u$ and a neighbour of $v$ such that both have a neighbour that is
not adjacent to any of the cleaned vertices (see~\cref{fig:case3}), or a)
without loss of generality, all neighbours of $u$ will only be connected to $u$
and $N(v)$ or b) our graph will only consist of 9 vertices. For a) we can
recycle the same argument from the previous case to clean the whole graph (or
our choice of $u$ and $v$ was suboptimal), and for b) it is quite easy to check
that two cops can clean the whole graph as well. Otherwise, we can clean at
least $10$ vertices in this case.

\begin{figure}
\begin{center}
    \includegraphics[width=9cm,page=4]{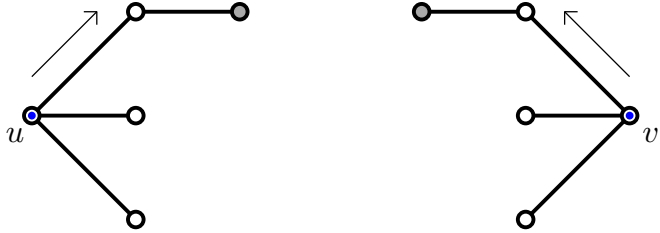}
        \caption{Two neighbours of vertices of degree at least 3 each have at
        least one further neighbour.}
        \label{fig:case3}
    \end{center}
\end{figure}

\item {\bf $G$ has at least two vertices $u,v$ with $d(u)\geqslant 3$ and $d(v)
    \geq 3$ that are at distance $2$, and any two vertices of degree at least
$3$ are at distance $2$ or less from each other.}

Here we have to differentiate between a few subcases. First, let us consider
the case where we have exactly two vertices $u,v$ with degree at least $3$ that
are at distance $2$. Then all other vertices with degree at least $3$ form a
clique (or there are no further vertices of degree at least 3) and are
connected to both $u$ and $v$, and we are in a similar case as before where all
vertices of degree at least $3$ are at distance $1$ to each other. We can place
one cop on this clique (or any common neighbour of $u$ and $v$ if there exists
no clique) and have the other cop successively clean the rest of the graph,
thus cleaning the whole graph (see \cref{fig:lastpage}).

\begin{figure}
\begin{center}
    \includegraphics[width=9cm,page=9]{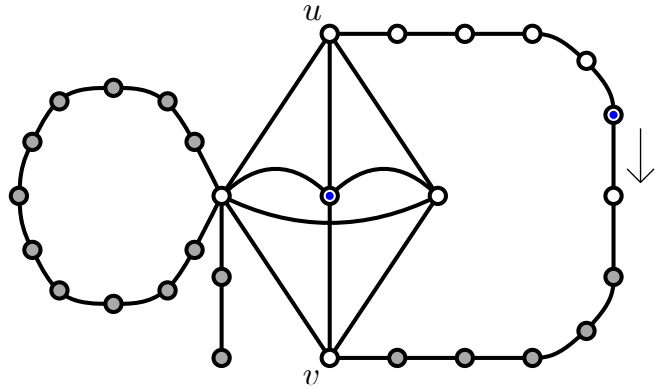}
        \caption{All neighbours of $u$ and $v$ with degree at least $3$ form a
        clique.}
        \label{fig:lastpage}
    \end{center}
\end{figure}

If $u$ and $v$ have exactly one common neighbour, we can place both cops on $u$
and $v$, respectively. If, without loss of generality, $d(v) \geqslant 4$, then
we will immediately clean at least $8$ vertices. Otherwise, one of the
neighbours of $u$ and $v$ will have another neighbour that is not in $N(u) \cup
N(v)$ (or we already cleaned the whole graph), thus moving one cop to this
vertex will yield $8$ cleaned vertices (see~\cref{fig:case4}).

\begin{figure}
\centering
\includegraphics[width=9cm,page=5]{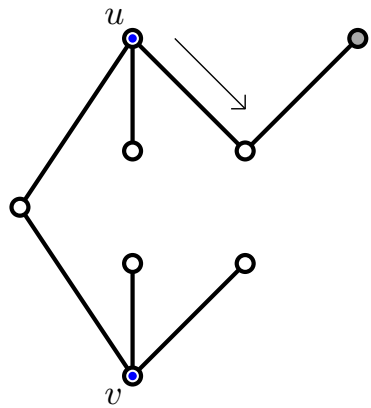}
        \caption{Two vertices with one common neighbour.}
        \label{fig:case4}
\end{figure}

If $u$ and $v$ have exactly two common neighbours, if $G$ has $6$ vertices, we
can clean the whole graph, by placing both cops on $u$ and $v$, respectively.
If $G$ has $7$ vertices, we can repeat the argument from before to clean at
least $7$ vertices (thus the whole graph). If $G$ has at least $8$ vertices,
since $G$ is connected, we can briefly consider the configurations, in which
these vertices can be connected to our graph.

\begin{itemize}
    \item If at least two vertices (say $w_1, w_2$) are incident to $\{u\} \cup
        \{v\} \cup N(u) \cup N(v)$, we can place our cops on $u$ and $v$ and
        then in the next turn move those to vertices that are incident to $w_1$
        and $w_2$ (in case these vertices are not incident to $u$ and $v$),
        thus cleaning at least $8$ vertices.

    \item If only one vertex (say $w_1$) is incident to $\{u\} \cup \{v\} \cup
        N(u) \cup N(v)$, there has to be at least one more vertex which is
        incident to $w_1$ (see \cref{fig:case5}). Thus we can clean at least
        $8$ vertices by placing a cop on $w_1$ in $N(u) \cup N(v)$ and moving
        it to the outside neighbour of $w_1$ in the next turn, while the other
        cop stays stationary at $v$ (or $u$) or can be used to clean the rest
        of $\{u\} \cup \{v\} \cup N(u) \cup N(v)$.
\begin{figure}
\centering
\includegraphics[width=9cm,page=6]{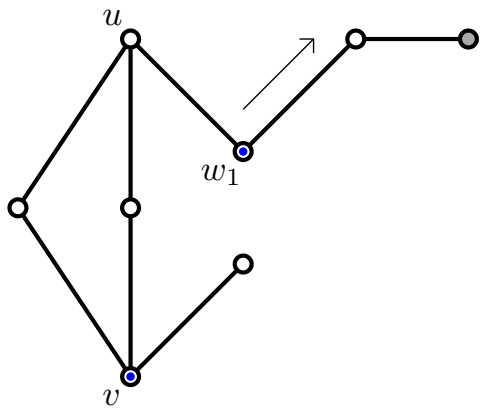}
        \caption{Two vertices with two common neighbours.}
        \label{fig:case5}
\end{figure}

\end{itemize}

If $u$ and $v$ have at least three common neighbours, let us again consider a
few subcases separately.

\begin{itemize}
    \item If at least $3$ further vertices are connected to $u,v$, and/or at
        most $2$ vertices in $N(u) \cup N(v)$, we can place the cops on $u$ and
        $v$ and then move them to the vertices which are incident to the $3$
        other vertices (if necessary), thus cleaning at least $8$ vertices.
    \item If there are at most $2$ further vertices incident to $\{u\} \cup
        \{v\} \cup N(u) \cup N(v)$, we can place the cops on $u$ and $v$ and
        then move them to the vertices incident to outgoing edges in at most
        one turn. Then we have either cleaned the whole graph, or one of the
        outer vertices has to have at least one more neighbour, thus by moving
        a cop to this outer vertex we clean at least $8$ vertices.
    \item The last case to consider is the case where there are at least $3$
        outgoing edges to at least $3$ further vertices, none of which are
        connected to $u$ or $v$ (see~\cref{fig:case6}). We have to consider two
        subcases.

        \begin{figure}[h]
            \begin{center}
                \includegraphics[width=8cm,page=7]{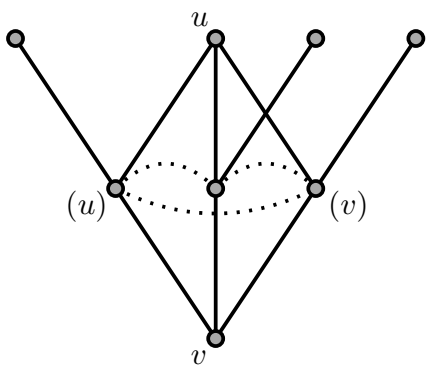}
                \caption{Two vertices with at least 3 common neighbours.}
                \label{fig:case6}
            \end{center}
        \end{figure}

        If every edge in $N(u) \cap N(v)$ is present in the graph, we can place
        our cops on two vertices of $N(u) \cap N(v)$ with outgoing edges, and
        then move one of the cops to another vertex in $N(u) \cap N(v)$ with an
        outgoing edge, thus cleaning at least $8$ vertices. If at least one of
        the edges in $N(u) \cap N(v)$ is missing, we can chose two non-adjacent
        vertices in $N(u) \cap N(v)$, which we can then consider to be $u$ and
        $v$ in our previous case, unless both vertices have another neighbour
        in common, in which case we are in the first subcase of this case. In
        both cases, we can thus clean at least $8$ vertices.

\end{itemize}

\end{enumerate}

\end{proof}

A budget manager looking to extract maximum utility from their cops would be
curious about the following question: given $k$ $l$-visibility cops, what is
the smallest (connected) graph that they can't clean? For two 1-visibility
cops, we have shown that we can clean all connected graphs on 8 vertices or
fewer. We have further verified computationally that in fact they can clean
every connected graph on up to 10 vertices. As we will prove in the next
section, the Heawood graph on 14 vertices (see \cref{fig:heawood}) cannot be
cleaned by two 1-visibility cops, but our computations indicate that they can
clean at most 10 of its vertices. This leads us to conjecture that 10 cleaned
vertices is the best one can hope to achieve.

\begin{conjecture}
    \label{conj:10}
    If $G$ is a connected graph, then two 1-visibility cops can clean at least
    10 vertices, if not the whole graph.
\end{conjecture}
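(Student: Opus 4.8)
The plan is to upgrade the case analysis of \cref{thm:>=8} from a guarantee of $8$ to a guarantee of $10$, using the Heawood graph (analysed in the next section) as the extremal example that pins the bound at exactly $10$. The skeleton of the argument is already in place: the first two cases of \cref{thm:>=8} clean the \emph{whole} graph, so they need no change, and the distance-$\ge3$ case already produces $10$ cleaned vertices apart from the explicitly enumerated small exceptions, which are handled by hand. Thus essentially all of the new work is concentrated in the distance-$2$ regime (the fourth case), where the present proof several times stops at $8$.

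Before attacking that case, I would first isolate the genuinely easy sub-families via a \emph{path-like} versus \emph{cycle-like} dichotomy. On any sufficiently long induced path, two $1$-visibility cops can \emph{sweep}: placing them a few vertices apart and then walking both outwards, the clean window grows by two vertices per turn, because the interior stays sealed (no clean vertex is ever adjacent to gas) while each cop annexes one new frontier vertex. Hence any graph containing a long enough pendant path or tree-like end reaches $10$ (indeed the whole component) by sweeping, and only the ``compact'' graphs---those of bounded radius with every vertex forced close to a high-degree vertex---remain. This already disposes of the spider and path configurations and trims the distance-$2$ case to graphs of small diameter.

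The core is then a finite, local analysis around the two distance-$2$ vertices $u,v$ and their common neighbours. I would parametrise by $\deg u,\deg v$ and $c=|N(u)\cap N(v)|$, and in each regime exhibit a two-move strategy beating $8$. When $\deg u+\deg v-c$ is large the two closed neighbourhoods already clean close to $10$, so the delicate cases are exactly the ones the Heawood graph exemplifies: $3$-regular, girth $\ge 6$, with the prescribed number of common neighbours, leaving the initial clean set of size roughly $4$--$6$. Here the cops must gain the extra vertices by pushing into second neighbourhoods, and one must check that moving a cop to reach a far neighbour does not let gas flood back across the vacated region. The decisive quantity is whether the local structure has a \emph{free end}---a clean vertex with an uncovered neighbour that is not itself adjacent to gas---into which one cop can expand while the partner seals the remainder.

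The hard part, and presumably the reason this is stated only as a conjecture, is controlling gas spread in precisely the cycle-like, high-girth configurations where there is \emph{no} free end: gas can return around short cycles (as on $C_n$, where a single $1$-visibility cop is capped at $2l+2$), so sweeping fails. One would need to show that any connected graph realising such a configuration and defeating the $10$-vertex target must contain an essentially Heawood-like obstruction, and then rule out a smaller one. Making this final reduction rigorous---bounding the radius of the relevant neighbourhood so that the residual cases form a \emph{finite}, computer-checkable list consistent with the verification up to $10$ vertices and the $10$-vertex ceiling found for the Heawood graph---is the crux that remains open.
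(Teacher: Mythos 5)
There is nothing in the paper to compare your proposal against: the statement you are addressing is \cref{conj:10}, which the paper leaves \emph{open}, supported only by evidence (a computational check that two 1-visibility cops clean every connected graph on at most 10 vertices, and computations on the Heawood graph suggesting its maximum cleanable set has size 10). Your proposal, read carefully, is also not a proof --- you concede in your final paragraph that the reduction to a finite, checkable list of residual configurations ``remains open''. So the honest verdict is that you have written a research plan, not a proof, and the plan itself has a concrete hole beyond the admitted one.

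The hole is your path-sweeping dichotomy. The claim that on a ``sufficiently long induced path'' the clean window grows by two vertices per turn because ``the interior stays sealed'' is false in a general graph: an induced path only forbids chords among its own vertices, and its interior vertices may have arbitrarily many neighbours off the path. As the cops walk outwards, those off-path neighbours are gaseous and immediately reflood the vacated interior, so sweeping only works on pendant paths (or paths whose interior attachments are all clean and protected), which is a far weaker hypothesis. Consequently your reduction of the problem to ``compact graphs of bounded radius'' does not follow: graphs with no long pendant path need not have bounded radius or diameter at all --- high-girth regular graphs, precisely the hard family identified by \cref{thm:girth}, have long induced paths, no pendant ends, and unbounded size, yet none of them is dispatched by your dichotomy. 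This means the ``core'' distance-2 analysis you sketch is not a local, finite problem as you parametrise it by $\deg u$, $\deg v$, and $|N(u)\cap N(v)|$: the failure of a two-move strategy to reach 10 depends on gas returning through second and third neighbourhoods, so the case analysis has no a priori bounded radius, and nothing in the proposal supplies such a bound. Absent that, the appeal to a ``Heawood-like obstruction'' is a restatement of the conjecture rather than a step towards it.
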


\begin{figure}
    \begin{center}
        \includegraphics[width=7cm,page=8]{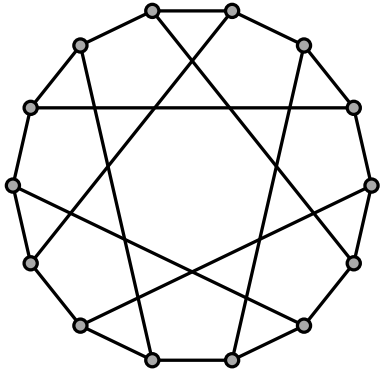}
        \caption{The Heawood graph}
        \label{fig:heawood}
    \end{center}
\end{figure}

\subsection{Cages are tough to clean}
\label{subsec:girth}

In this section, we investigate the plight of every zookeeper: cleaning cages.
In particular, we give upper bounds on the smallest graphs that cannot be
cleaned by $k$ $l$-visibility cops; we show that, if $G$ is a graph with
$\delta(G)\geqslant k$ and girth at least $2l+4$, then we need at least $k$
$l$-visibility cops to clean it. The idea is similar to the one used in
\cite{aigner.fromme:game} to show that the cop number of a graph with girth
greater than $4$ is at least its minimal degree.

\begin{theorem}
    \label{thm:girth}
    If $G$ is a graph with $\delta(G)\geqslant k$ and girth at least $2l+4$,
    then $\cop'_l(G)\geqslant k$.
\end{theorem}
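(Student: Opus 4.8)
The plan is to work with the cleaning reformulation rather than with pursuit directly. Since the paper has already noted that $\cop'_l(G)$ equals the minimum number of cleaners needed to clean $G$ of all gas, and since the cleaning process is a deterministic one-player game (there is no adversary once a cleaner trajectory is fixed), it suffices to prove the following: for \emph{every} trajectory of $k-1$ cleaners of visibility $l$, the gaseous set is nonempty at every timestep. Equivalently, I will exhibit a perpetually gaseous ``virtual robber'', in direct analogy with the Aigner--Fromme evasion argument, where now ``being caught'' is replaced by ``being within distance $l$ of a cleaner'' (so the danger radius is $l$ rather than $0$).

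The geometric heart of the argument is the following lemma, which is the exact analogue of Aigner--Fromme's observation that in a graph of girth at least $5$ a single cop guards at most one neighbour of the robber: \emph{in a graph of girth at least $2l+4$, for any two vertices $r$ and $c$, at most one neighbour of $r$ lies within distance $l$ of $c$.} Indeed, if $u_1\neq u_2$ were two neighbours of $r$ with $\dist(u_1,c)\le l$ and $\dist(u_2,c)\le l$, then concatenating a geodesic from $u_1$ to $c$, a geodesic from $c$ to $u_2$, and the edges $u_2 r$ and $r u_1$ gives a closed walk of length at most $2l+2$. As it traverses the two distinct edges $ru_1,ru_2$ at $r$, this walk is not reducible and therefore contains a cycle, necessarily of length at most $2l+2 < 2l+4$, contradicting the girth hypothesis.

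I would then run the virtual robber with the invariant that, at the start of each cleaners' turn, there is a gaseous vertex $r$ with $\dist(r,c_i)\ge l+1$ for every cleaner $c_i$. For the base case, one checks that $k-1$ balls of radius $l$ cannot cover $V(G)$, since $\delta(G)\ge k$ and girth at least $2l+4$ force $G$ (via the Moore bound) to have more vertices than $(k-1)$ such balls can contain; hence some vertex satisfies the invariant initially. For the inductive step, after the cleaners move to new positions $c_i'$ the robber seeks a neighbour $u$ of $r$ with $\dist(u,c_i')\ge l+1$ for all $i$: a neighbour is forbidden only if it lies within distance $l$ of some $c_i'$, and by the lemma each of the $k-1$ cleaners forbids at most one of the $\ge k$ neighbours of $r$, so a permissible $u$ always exists. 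Such a $u$ is not cleaned (its distance to every cleaner is $>l$), so it will be gaseous provided it is adjacent to a vertex whose gas survives the cleaners' move, and the invariant is re-established.

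The step I expect to be the genuine obstacle is precisely this \emph{seeding} issue of timing, which is where the hypothesis $l\ge 1$ must be used (for $l=0$ the statement is simply false, e.g.\ $K_{3,3}$, and one really needs girth $5=2l+5$). If $r$ itself survives, it seeds $u$ and we are done; the difficulty is the case in which some cleaner reaches distance exactly $l$ from $r$, cleaning $r$ in the same timestep in which we wish to use it as a seed. To handle this I would strengthen the invariant so that the robber always sits on a \emph{gaseous edge} — two adjacent gaseous vertices $r\sim r'$, each at distance $\ge l+1$ from all cleaners — which is automatically produced by the fatness of the gaseous region (after any spread, the entire uncleaned closed neighbourhood of the previous position is gaseous for $l\ge 1$). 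A single cleaner cannot clean both endpoints of such an edge: if $\dist(c',r)\le l$ and $\dist(c',r')\le l$ with $r\sim r'$, then the two geodesics to $c'$ together with the edge $rr'$ yield a cycle of length at most $2l+1$, again excluded by the girth. Thus at least one endpoint survives to seed the robber's escape, closing the gap. Making the interaction of the escape count (which needs the danger threshold to be exactly $l$) with this survival guarantee fully rigorous is the only delicate part; everything else is bookkeeping on distances.
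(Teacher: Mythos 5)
Your skeleton matches the paper's: both work in the cleaning reformulation and both hinge on an Aigner--Fromme-style lemma that a single cop (via the girth hypothesis) can block at most one neighbour of the gaseous vertex, so $k-1$ cops cannot stop the gas from spreading along some edge forever. Your static lemma (two neighbours $u_1,u_2$ of $r$ both within distance $l$ of $c$ force a cycle of length at most $2l+2$) is correct, and your base case is true — though your justification via the Moore bound is wrong as stated, since balls of radius $l$ have size governed by $\Delta(G)$, not $k$, and can be enormous; the correct (and easier) justification is to apply your own static lemma at time $0$: any vertex has at least $k$ neighbours, each of the $k-1$ balls covers at most one of them, so an uncovered neighbour always exists, and iterating this once produces the uncovered gaseous \emph{edge} you need.

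The genuine gap is in the inductive step, at exactly the point you flagged as delicate. Your survival guarantee rests on the claim that no \emph{single} cleaner can clean both endpoints of the gaseous edge $rr'$, and you conclude ``thus at least one endpoint survives.'' But there are up to $k-1$ cleaners, and nothing prevents two \emph{different} cleaners from cleaning $r$ and $r'$ in the same turn: cleaner $1$ steps from distance $l+1$ to distance exactly $l$ from $r$ while cleaner $2$ does the same to $r'$. The girth gives no obstruction here — the two geodesics together with the edge $rr'$ form a path between two distinct cleaner positions, not a closed walk. In that event both seeds die, and your chosen escape vertex $u$ (at distance $\geqslant l+1$ from all \emph{new} positions) may still fail to be gaseous, because its distance to some cleaner's \emph{old} position can be exactly $l$, meaning $u$ was clean at the start of the turn and now has no gaseous neighbour to reseed it; the ``fatness'' of the gaseous region does not save you, since fatness only covers the \emph{uncleaned} neighbours of the previous position. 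The paper closes precisely this hole by a different device: it builds one-step lookahead into the blocking relation — a neighbour $u$ of $v$ counts as blocked if some cop is within distance $l$ of $u$ \emph{now or after one further cop move} — so the surviving vertex is guaranteed, by construction, to stay gaseous through the cops' next move and can always seed its successor. The lookahead case (one endpoint seen from $c$, the other from an adjacent $c'$) produces a cycle of length at most $2l+3$, and this is exactly what the hypothesis girth $\geqslant 2l+4$ is calibrated for. Tellingly, your argument only ever excludes cycles of length at most $2l+2$, i.e.\ it never uses the full girth hypothesis — if it were correct it would prove the theorem with girth $2l+3$, which should have been a warning sign. Replacing your gaseous-edge invariant with the paper's lookahead blocking (and keeping your static lemma, which then covers the remaining cases) repairs the proof.
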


\begin{proof}
    Let $l$ be given and let $v$ be a vertex of $G$ that is gaseous. We will
    show that in the next step there will always exist a vertex $u \in N(v)$
    that will be gaseous. To show this, we will show that each cop can prevent
    the gas from spreading to at most one neighbour of $v$, thus showing that
    $k$ of them are needed. We will say that an $l$-visibility cop $c$
    \textit{blocks} vertex $u \in N(G)$ if it is at a distance of at most $l$
    from $c$ or if it is at a distance of at most $l$ after the cop has moved
    once (after the spreading of the gas). In other words, we won't say that a
    vertex is gaseous if the cop will clean it in the very next move. Assume
    now the contrary, that there is a cop $c$ that can block at least two
    neighbours of $v$, say $u_1$ and $u_2$. If they are blocked in the first
    described sense---if they are in the $l$-neighbourhood of $c$ prior to the
    spreading of the gas---then we have that $v, u_1, c, u_2$ lie on a
    $(2l+2)$-cycle, which contradicts our assumption; see \cref{fig:girth} for
    the case $l=1.$ If one of them is in the $l$-neighbourhood of $c$ prior to
    the spreading of the gas, say $u_1,$ and the other is seen by the cop after
    he has moved, say $u_2$, that would mean that there is a vertex $c'$ which
    is a direct neighbour of $c$ and in whose $l$-neighbourhood $u_2$ is. But
    then $v, u_1 , c, c', u_2$ would lie on a $(2l+3 )$-cycle, which is again
    in contradiction with our assumptions. That the cop cannot block two
    vertices after the gas spreads is similar to the first case, since we would
    again have a $(2l+2)$-cycle.

    Therefore, each cop can block at most one neighbour of $v$ and since there
    are at least $k$ neighbours for every vertex $v \in G$, at each step the
    gas will spread to another vertex with the same property. As we start with
    at least one gaseous vertex, we conclude that $k$ cops cannot clean the
    whole graph.
\end{proof}

\begin{figure}[t]
    \centering

    \begin{subfigure}[b]{0.45\textwidth}
        \centering
        \includegraphics[ page =1]{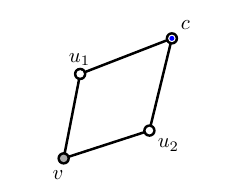}
        \subcaption{
            The first described case from the theorem: the cop $c$ sees both
            $u_1$ and $u_2$ before the spreading of the gas.
            \phantom{texttexttext}
        }
        \label{fig:girh-1}
    \end{subfigure}
    \hfill
    \begin{subfigure}[b]{0.45\textwidth}
        \centering
        \includegraphics[page =2]{girth.pdf}
        \subcaption{
            The second described case from the theorem: the cop $c$ sees $u_1$
            prior to the spreading of the gas and $u_2$ after he has moved to
            $c'$.
        }
        \label{fig:girth-2}
    \end{subfigure}
    \caption{
        Illustration of the cases from the proof of Theorem \ref{thm:girth} for
        $l=1.$
    }
    \label{fig:girth}
\end{figure}

Graphs of girth at least $m$ and minimum degree at least $n$ exist whenever
$n\geqslant 2 $ and $m\geqslant 3$, and they are well studied
\cite{exoo.jajcay:dynamic}; and those with a minimum number of vertices (for a
pair $(n,m)$) are called \emph{$(n,m)$-cages}). In particular, if the
visibility is $l=1$, then there exist $k$-regular graphs of girth $6$ with no
more than $2(1+\varepsilon)k^2$ vertices, for every $\varepsilon>0$ and $k$
large enough~\cite{brown:on} (they can be constructed by starting from the
incidence graph of a projective plane of order $\geqslant k-1$ and then erasing
some conveniently chosen vertices and edges). This result is the best possible,
as it is known that any such graph must have at least $2k^2$ vertices. (Note,
however, that we do not disallow the possibility that some completely different
construction than the one from Theorem \ref{thm:girth} might give a graph with
less then $2k^2$ vertices not cleanable by $k$ cleaners with visibility $1$;
we, however, strongly doubt that this is true.) For $l>1$, that is, if a girth
of at least 8 is fixed, then only some messier upper bounds for the minimum
number of vertices with respect to $\delta(G)$ are known (see
again~\cite{exoo.jajcay:dynamic}).

The Heawood graph we mentioned at the end of the last section (see
\cref{fig:heawood} is a (3,6)-cage with 14 vertices. Thus, by \cref{thm:girth},
we know that we need at least three 1-visibility cops to clean it. It would be
interesting to consider, in general, a smallest graph that cannot be cleaned by
$k$ $l$-visibility cops. We proved earlier in \cref{thm:>=8} that two
1-visibility cops can clean all graphs on at most 8 vertices (and we verified
computationally that they can clean all graphs on at most 10 vertices). So, a
smallest graph that two 1-visibility cops cannot clean must have between 11 and
14 vertices inclusive.

\begin{problem}
    What is a smallest graph that two 1-visibility cops cannot clean? More
    generally, what about for $k$ $l$-visibility cops?
\end{problem}

\section{Stumbling across the robber by chance}
\label{sec:prob}

As discussed before, cleaning a graph or trying to find a robber with limited
visibility are two sides of the same coin. But, if we apply those processes in
practice, then there are noticeable differences between the two philosophies
when chance becomes a factor, which we will explain with the following example.
Essentially, we wish to discuss what happens when we forgo the omniscience of
the robber.

We mentioned earlier that $\cop_1'(C_5)=2$ and $\ded_1^1(C_5)=1$ (see
\cref{fig:C5}). To be more explicit about why this is the case, consider the
following: on any cycle, two cops are enough to catch the robber (even with
$0$-visibility) by simply moving around in opposite directions, so
$\cop_1'(C_5)\le 2$. But one cop is clearly not enough: in terms of the gas
cleaning process, at the beginning of the cop's turn there are exactly three
cleaned vertices, and in whichever direction the cop chooses to move, they will
clean only one additional vertex, still leaving one vertex gaseous. Therefore,
$\cop_1'(C_5)=2$ and $\ded_1^1(C_5)=1$.

One cop is, however, much closer to seeing the robber than it might seem at
first: the robber is always at distance at most 2 from the cop, so if at any
point in the game the cop chose the direction of the shortest path to the
robber, they would reduce the distance to 1 and see the robber. Although the
cop cannot know what this shortest path to the robber is, if they would choose
their direction by simply flipping a coin, they would have $50\%$ chance of
winning at any point in the game. So for any $n$, the probability of the cop
failing to see the robber after $n$ turns is $1/2^n$, which drops to $0$ in the
limit if they play indefinitely. That is, the probability of eventually seeing
the robber is $1$. This leads us to the following definition.

\begin{definition}
    Given a graph $G$ and a non-negative integer $l$, we define the
    \emph{eventual capture number} with $l$-visibility cops, written
    $\evcapt_l(G)$, as the minimum number of cops required such that the
    probability of eventually capturing the robber is 1, where at each turn
    cops choose their move uniformly from the set of all possible moves.

    Similarly, we define the \emph{eventual seeing number} with $l$-visibility
    cops, written $\evcapt'_l(G)$, as the minimum number of cops required such
    that the probability of eventually seeing the robber is 1 in the same
    probability space as above.
\end{definition}

Note that asking for probability 1 in the above definition is the same as
asking for non-zero probability (for finite graphs). In fact, it is even
possible to represent these two numbers deterministically.

\begin{proposition}
    \label{prop:ec=c}
    If $G$ is a graph, then $\evcapt_l(G)=\cop(G)$ for all non-negative
    integers $l$. Moreover, for all $l$, $\evcapt'_l(G)$ is equal to the
    minimum number $\reach_l(G)$ of classical cops (i.e.\ with non-restricted
    visibility) that have a strategy to reach a vertex in $G$ that is at
    distance at most $l$ from the robber. 
\end{proposition}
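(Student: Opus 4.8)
The plan is to prove two separate equalities, handling the "eventual capture" and "eventual seeing" claims independently since they reduce to different deterministic quantities. For both, the key conceptual point is the observation the authors already made parenthetically: for a finite graph, asking for probability exactly $1$ is equivalent to asking for nonzero probability. I would isolate this as the first step. The forward direction is immediate (probability $1$ implies probability nonzero). For the converse, I would argue that if some fixed, possibly adaptive, deterministic cop strategy captures (resp.\ sees) the robber against every robber strategy in a bounded number of turns $T$, then at every turn the uniform random strategy has a positive probability (bounded below by a constant depending only on the maximum degree and the number of cops) of agreeing with that winning deterministic strategy for the next $T$ moves; since the robber is omniscient but the game state space is finite, a standard Borel--Cantelli / geometric-trials argument shows the random cops eventually realise such a winning block with probability $1$.

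With that reduction in hand, the first equality $\evcapt_l(G)=\cop(G)$ should follow cleanly. The inequality $\evcapt_l(G)\ge\cop(G)$ holds because nonzero probability of capture requires the existence of \emph{some} deterministic play of the random cops that captures the robber, which is exactly a winning cop strategy in the classical (full-visibility) game; note that visibility restrictions do not matter here because in the probabilistic model the cops ignore what they see and simply move uniformly at random, so the parameter $l$ is irrelevant to the cops' behaviour. Conversely, $\cop(G)$ classical cops have a winning strategy, and by the reduction above the uniform-random cops will reproduce that winning strategy with probability $1$; hence $\evcapt_l(G)\le\cop(G)$.

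For the second equality, I would define $\reach_l(G)$ as in the statement (the minimum number of full-visibility cops that can force reaching a vertex within distance $l$ of the robber) and mirror the argument. The inequality $\evcapt'_l(G)\ge\reach_l(G)$ follows since positive probability of seeing the robber means some deterministic play of the random cops reaches within distance $l$ of the robber against every robber response, which is precisely a winning strategy for the reaching game. The reverse inequality $\evcapt'_l(G)\le\reach_l(G)$ again uses the reduction: a winning reaching strategy is reproduced with probability $1$ by the uniform-random cops. The only subtlety is that ``seeing'' in the $l$-visibility model means a cop lands within distance $l$ of the robber, which matches the reaching condition exactly, so the two deterministic quantities coincide by definition.

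The main obstacle, and the step I would spend the most care on, is the reduction from positive probability to probability $1$ against an \emph{omniscient} robber. One has to argue that the robber, despite knowing all future coin flips, cannot exploit this to drive the capture probability to zero. The cleanest route is to note that the game is finite: the set of board configurations is finite, so any winning deterministic cop strategy wins within a bounded number $T$ of turns regardless of the robber. Then, conditioning on the current configuration, the event ``the next $T$ random cop moves coincide with the winning strategy's prescribed moves'' has probability at least $\varepsilon>0$ (a constant), and this happens independently across disjoint blocks of $T$ turns; an infinite sequence of independent trials each succeeding with probability $\ge\varepsilon$ succeeds eventually with probability $1$. The robber's omniscience does not help because the winning strategy is winning against \emph{all} robber plays, so whatever the robber does in response to a realised winning block, capture (resp.\ seeing) still occurs. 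I would make sure to state carefully that the random cops' moves are chosen independently of the robber's position and of future configurations, so that the independence across blocks is genuine.
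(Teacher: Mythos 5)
Your overall architecture is the same as the paper's: the lower bound comes from letting the robber play a classical evasion strategy (forcing the capture/seeing probability to $0$, where visibility is irrelevant because the random cops ignore what they see), and the upper bound comes from the random cops almost surely reproducing a deterministic winning strategy on a finite graph; your geometric-trials argument is just a more detailed version of the paper's one-sentence appeal to finiteness. However, the step you yourself flag as the crux contains a genuine error: you insist the robber knows all future coin flips and claim this ``does not help because the winning strategy is winning against all robber plays.'' That inference fails. The event you need---that the next $T$ random moves coincide with the prescriptions of the winning strategy---involves prescriptions that depend adaptively on the robber's interleaved responses; if those responses may depend on future flips, the prescription at step $s$ is correlated with the flip at step $s$, and your conditional lower bound $\varepsilon$ evaporates. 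Concretely, take $G=C_5$, one cop, $l=1$ (the paper's own motivating example). Here $\reach_1(C_5)=1$, but a robber who knows the cop's next flip can always move to a vertex at distance $2$ from the cop's announced next position (from distance $2$ on $C_5$ such a move always exists in the robber's closed neighbourhood), so the seeing probability is $0$ and $\evcapt'_1(C_5)\geq 2$ in the prescient model---the proposition as you interpret it is simply false, so no proof can survive that modelling choice. The intended model, which is the entire point of \cref{sec:prob} (``what happens when the robber is not omniscient?''), is that the robber's moves are adapted to the observed history only. Under that model your block argument is sound: the prescription at each step of a block is measurable with respect to the past, so each flip matches it with conditional probability at least $(\Delta(G)+1)^{-k}$, and the conditional (L\'evy) Borel--Cantelli lemma finishes; note that your ``independence across disjoint blocks'' should be phrased exactly this conditional way, since the configurations at block starts are not independent.

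Two smaller repairs. First, in your lower bound there is a quantifier slip: positive probability of capture against every robber strategy does not directly yield one deterministic play of the cops that beats \emph{all} robber strategies (a fixed play is not an adaptive strategy, and the witnessing coin-flip sequences vary with the robber's strategy). Argue the contrapositive, as the paper does: if $k<\cop(G)$ (resp.\ $k<\reach_l(G)$), the robber has an evasion strategy using only observed positions, and playing it against the random cops makes the capture (resp.\ seeing) probability $0$. Second, your blocks begin at arbitrary mid-game configurations, so you need the winning/reaching strategy to be available from any configuration, not just from optimally chosen initial cop positions; this is standard (the cops may first walk to an optimal initial placement, and winning from the best robber response implies winning from any), but it deserves a sentence, and the paper glosses it too. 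Finally, a structural remark rather than a gap: the paper first reduces via $\evcapt_l(G)=\evcapt'_0(G)$ and $\cop(G)=\reach_0(G)$ so that only the second equality needs proving, whereas you prove the two equalities separately; the two decompositions are equivalent in content.
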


\begin{proof}
    First note that $\evcapt_l(G)=\evcapt'_0(G)$. This is because seeing the
    robber with $0$-visibility is the same as capturing them, and the strategy
    for the eventual capture is chosen at random, i.e. irrespective of the
    value of $l$. Therefore it suffices to show the second part of the
    statement.

    Now suppose $\reach_l(G)=k$. That is, $k$ cops have a strategy to get
    within distance at most $l$ from the robber. But then with probability $1$
    the $l$-visibility cops will play this optimal strategy for long enough to
    guarantee seeing the robber, since the graph is finite and there is only a
    limited number of possible moves available at each turn. This shows
    $\evcapt'_l(G)\le \reach_l(G)$.

    For the other direction, suppose that at most $k-1$ $l$-visibility cops
    play the probabilistic game. By definition, the robber has a strategy to
    avoid at most $k-1$ standard cops indefinitely, regardless of their moves.
    Consequently, the robber, whose vision is not restricted, can avoid at most
    $k-1$ cops moving randomly, as well. This concludes the proof.
\end{proof}

Note that the number $\reach_l(G)$ from the previous proposition has already
been studied by some researchers, e.g.\ in \cite{bonato.chiniforooshan:pursuit,
chalopin.chepoi.ea:cop}.

So, it is now clear that we have the following relationships:
\begin{align*}
    \reach_l(G)=\evcapt'_l(G)&\leq\evcapt_l(G)=\cop(G)\leq\cop_l(G)\\
    \cop'_l(G)&\leq\evcapt_l(G)=\cop(G)\leq\cop_l(G).
\end{align*}

A natural question to ask is whether the difference $\cop'_l(G)-\evcapt'_l(G)$
can be arbitrarily large, and indeed it can. In \cite[Theorem 3.1 and Lemma
4.4]{clarke.cox.ea:limited}, it is shown that, for every positive integer $k$,
there exists a tree $T$ with $\cop'_l(T)=k+1$. On the other hand, $\cop(T)=1$
in any tree, which gives $\evcapt'_l(T)=\reach_l(T)\le\cop(T)=1$, and hence
$\cop'_l(T)-\evcapt'_l(T)=k$.

Note that $\cop(G)=\reach_0(G)$ for all $G$. So, in the example above, we had
$\reach_l(G)\le\reach_0(G)=\cop(G)<\cop_l'(G)\le\cop_l(G)$. When additionally
$l\ge 2$, this implies by \cite[Theorem 3.1]{clarke.cox.ea:limited} that
$\cop_l(G)=\cop_l'(G)$. On the other hand, the chain of inequalities might as
well read $\reach_l(G)\le\cop_l'(G)\le\cop(G)\le\cop_l(G)$ (with the additional
inequality $\cop_l(G)\le\cop(G)+1$ if $l\ge2$, again by \cite[Theorem
3.1]{clarke.cox.ea:limited}). When this is the case, it is perfectly possible
for the first inequality to be an equality (e.g. for $l=1$ and $G=C_4$, we have
$\reach_1(C_4)=\cop_1'(C_4)=1$). Intuitively, $\reach_l(G)=\cop_l'(G)$ means
that lacking knowledge of the robber's position does not add any extra
difficulty to seeing them: the ``hardest'' part is actually reaching a position
from where they can possibly be seen. Therefore, it might be interesting to
characterise the graphs where this is the case.

\begin{problem}
    Characterise the graphs $G$ for which $\reach_l(G)=\cop'_l(G)$.
\end{problem}

Another natural question, given \cref{prop:ec=c}, is the following: even though
$\evcapt_l(G)=\cop(G)$, the expected amount of time it takes for the
$l$-visibility cops to win will not always be equal to the amount of time it
takes for the classical cops to capture the robber. The study of capture time
for classical cops and robbers was introduced in
\cite{bonato.golovach.ea:time}, and has received considerable interest since.
Perhaps it would be interesting to consider the difference between the expected
$l$-visibility capture time and the classical capture time. For example, using
two cops, the classical capture time in $C_5$ is 1, but the expected capture
time with 1-visibility is 2.

\section{Acknowledgements}

The authors are grateful for the Games and Graphs Workshop, held 28th--31st of
October 2024 in Lyon, France, which birthed this collaboration; in particular,
thanks are due to the organising committee for their hospitality and
organisation of the workshop: Eric Duch\^ene, Arthur Dumas, Valentin Gledel,
Fionn Mc Inerney, Aline Parreau, and Th\'eo Pierron.

\bibliographystyle{plainurl}
\bibliography{references}

\end{document}